\newcommand{\dS}{\mathrm{d}\Sigma}
\newcommand{\ddt}{\frac{d}{dt}}
\newcommand{\nSt}{\overline{\nabla}}
\newcommand{\Db}{\overline{D}}
\newcommand{\Zb}{\overline{Z}}
\newcommand{\Fb}{\overline{F}}
\newcommand{\FE}{\mathcal{F\hspace{-0.9mm}E}_{\hspace{-0.7mm}sc}}
\renewcommand{\tt}{\tilde{t}}
\newcommand{\tS}{\tilde{\Sigma}}
\newcommand{\loc}{_\mathrm{loc}}
\newcommand{\locsc}{_{\mathrm{loc},sc}}
\newcommand{\locK}{_{\mathrm{loc},K}}
\newcommand{\locJK}{_{\mathrm{loc},J(K)}}
\newcommand{\Solve}{\mathrm{Solve}}
\newcommand{\D}{\mathcal{D}}
\newcommand{\N}{\mathbb{N}}
\newcommand{\R}{\mathbb{R}}
\def\C{\mathbb{C}}
\newcommand{\K}{\mathbb{K}}
\newcommand{\<}{\langle}
\renewcommand{\>}{\rangle}
\renewcommand{\phi}{\varphi}
\newcommand{\na}{\nabla}
\newcommand{\id}{\,\mathrm{id}}
\newcommand{\tr}{\mathrm{tr}}
\newcommand{\eps}{\varepsilon}
\newcommand{\dV}{\,\mathrm{dV}}
\newcommand{\vol}{\,\mathrm{vol}}
\newcommand{\AL}{\mathrm{A}_L}
\newcommand{\supp}{\mathrm{supp}}
\renewcommand{\Re}{\mathrm{Re}}
\newtheorem{thm}{Theorem}
\newtheorem{cor}[thm]{Corollary}
\newtheorem{lem}[thm]{Lemma}
\theoremstyle{definition}
\newtheorem{ex}[thm]{Example}
\newtheorem{rem}[thm]{Remark}
\newtheorem{dfn}[thm]{Definition}
\newcommand{\tref}[1]{Theorem~\ref{#1}}
\newcommand{\lref}[1]{Lemma~\ref{#1}}
\newcommand{\cref}[1]{Corollary~\ref{#1}}
\title{Initial value problems for wave equations on manifolds}
\author{Christian B\"ar}
\address{Universit\"at Potsdam, Institut f\"ur Mathematik, Am Neuen Palais 10, 14469 Potsdam, Germany}
\email{baer@math.uni-potsdam.de}
\urladdr{http://geometrie.math.uni-potsdam.de/}
\author{Roger Tagne Wafo}
\address{University of Douala, Faculty of Science, Dept.\ of Mathematics and Computer Science, P.O.Box 24157 Douala, Cameroon}
\email{rtagnewafo@yahoo.com}
\date{\today}
\keywords{wave equation, globally hyperbolic Lorentz manifold, Cauchy problem, Goursat problem, finite energy sections}
\subjclass[2010]{35L05, 35L15, 58J45}
\begin{document}

\begin{abstract}
We study the global theory of linear wave equations for sections of vector bundles over globally hyperbolic Lorentz manifolds.
We introduce spaces of finite energy sections and show well-posedness of the Cauchy problem in those spaces.
These spaces depend in general on the choice of a time function but it turns out that certain spaces of finite energy \emph{solutions} are independent of this choice and hence invariantly defined.

We also show existence and uniqueness of solutions for the Goursat problem where one prescribes initial data on a characteristic partial Cauchy hypersurface.
This extends classical results due to H\"ormander.
\end{abstract}

\maketitle


\section*{Introduction}

This paper is concerned with the \emph{global} theory of initial value problems for linear wave equations on curved spacetimes.
Applications are numerous:
electromagnetic radiation and gravitational waves in general relativity are described by such equations, the Klein-Gordon equation from quantum field theory and many equations from linear relativistic elasticity theory fall in this category, just to name a few.
The study of nonlinear wave equations like the Einstein equations also requires a good understanding of the linear theory.

Wave equations form a classical topic in the theory of partial differential equations.
Traditionally they are studied on subsets of Minkowski space.
There are excellent expositions of this theory in textbook format such as \cite{A09,H97,SS} and many more.
This theory can be used to understand the \emph{local} theory of wave equations on manifolds as well, see e.g.\ \cite{F,G}.

The setup in the present paper is the following:
The underlying spacetime $M$ on which the waves are defined is a Lorentz manifold.
The manifold $M$ may have any dimension.
In order to be able to set up a reasonable initial value problem we have to assume that the Lorentz manifold is globally hyperbolic.
This is a geometric condition which can be formulated in various seemingly different but equivalent ways.
One of them would be the existence of a Cauchy hypersurface, another one the existence of suitable time functions, so-called Cauchy temporal functions.
The waves are modeled by sections of a vector bundle.
So we allow for vector-valued functions and hence for systems of partial differential equations.
The equation is given by a second-order linear differential operator $P$ whose principal symbol is given by the Lorentz metric.

We are interested in solutions of the equation $Pu=f$ with given $f$ where $u$ should be defined on all of $M$.
We consider two types of initial value problems, commonly known as the Cauchy and the Goursat problem.
For the Cauchy problem we fix a spacelike Cauchy hypersurface $\Sigma$ and prescribe $u$ and its normal derivative along $\Sigma$.
For the Goursat problem we fix a characteristic (lightlike) partial Cauchy hypersurface and prescribe only $u$ along $\Sigma$.

We show well-posedness of the Cauchy problem in suitable function spaces (\tref{thm:Cauchy1}).
The initial data along $\Sigma$ lie in certain Sobolev spaces and $f$ is assumed locally square integrable in time and of some Sobolev regularity in space.
It then turns out that the solution $u$ lies in a space of finite energy functions meaning that $u$ and its time derivative are continuous in time and of some Sobolev regularity in space.
The definition of these function spaces requires a splitting of the spacetime in space and time for which there is no canonical choice.
In general, these function spaces do indeed depend on the choice of this splitting.
It will turn out however that the space of finite energy solutions to the homogeneous Cauchy problem, i.e.\ $f=0$, is independent of the choice of time function (\cref{cor:FEkerPIndep}).
As to the inhomogeneous problem, there is one particular Sobolev regularity scale in space for which the solution space is also independent of the time function (\cref{cor:FEPIndep}).

For the Goursat problem, there are a number of existence and uniqueness results for very special Cauchy hypersurfaces (also for quasi-linear equations) such as \cite{Ca82,CBCMG11,Do02} for characteristic cones and \cite{CP12,R90} for the intersection of two charactistic hyperplanes.
The characteristic initial value problem has been used to construct solutions to the coupled Einstein-Maxwell equations which develop a black hole in the future but have complete past \cite{Daf09} and to construct Hadamard states for quantum field theory on curved spacetimes \cite{GW14}.

We allow for arbitrary characteristic partial Cauchy hypersurfaces $\Sigma$ and show existence and uniqueness of solutions in the future $J^+(\Sigma)$ of the characteristic partial Cauchy hypersurface provided $J^+(\Sigma)$ is past compact (Theorem~\ref{thm:CharacteristicUniqueness}).
Without this geometric assumption existence of solutions still holds (Theorem~\ref{thm:GeneralExistence}) but uniqueness fails as is easily seen by examples.

The paper is organized as follows:
In the section on preliminaries we first recall a few basic notions from Lorentz geometry.
Then we introduce various spaces of sections of a vector bundle: smooth sections, distributional sections, square integrable and Sobolev sections and finite energy sections.
We describe the appropriate topologies on these section spaces.
Finally we recall the notion of a wave operator and give a few examples.

The second section is the analytic core of the paper.
We prove the energy estimate (\tref{thm:enest}, see also \cref{cor:EnEstOptimal}) which is behind the well-posedness of the Cauchy problem.

In the third section we derive the well-posedness of the Cauchy problem.
This complements the results in \cite{BGP07} where different methods were used to show the well-posedness of the Cauchy problem for smooth sections.
On the side, we see that smooth sections are dense in finite energy sections and smooth solutions are dense in finite energy solutions (\cref{cor:dense}).
Moreover, finite energy solutions to the homogeneous Cauchy problem are shown to have appropriate Sobolev regularity for all Sobolev scales.
For the solutions to the inhomogeneous Cauchy problem this still holds for one particular Sobolev scale (\cref{cor:FEHk}).

In the fifth section we use the results on the Cauchy problem to show existence of solutions to the Goursat problem.
Uniqueness is based on a Green's formula which we prove in the appendix.
These results generalize H\"ormander's Theorem~2 in \cite{H} where he shows existence and uniqueness under the assumption that $M$ is spatially compact, $f=0$ and $\beta\equiv 1$ where $\beta$ is the function from \eqref{eq:gbeta}.

In \cite{N1} H\"ormander's result has been shown to hold under rather weak assumptions on the regularity of the metric and the coefficients of the operator.
We have made no attempt to minimize the regularity assumptions of the geometric data.
The Lorentz metric and the coefficients of the differential operator are assumed to be smooth.
The spacelike Cauchy hypersurface on which the initial data for the Cauchy problem are prescribed is also assumed to be smooth.
This has the advantage that initial data of arbitrary Sobolev regularity can be treated.
On the other hand, for the characteristic Cauchy hypersurface occuring in the Goursat problem we make no regularity assumptions at all. 
A smoothness assumption would exclude basically all interesting examples.
As a consequence, we can only consider initial data of one particular Sobolev regularity.

\emph{Acknowledgments.}
It is a pleasure to thank Piotr Chrusciel, Miguel S\'anchez, Elmar Schrohe, and Christoph Stephan for very helpful discussion.
The first author thanks \emph{Sonderforschungsbereich 647} funded by \emph{Deutsche Forschungsgemeinschaft} for financial support.
The second author likes to thanks the \emph{Einstein Foundation Berlin} for financial support and the \emph{University of Potsdam} for its hospitality.


\section{Preliminaries}

In this section we collect the necessary background material on globally hyperbolic manifolds, on various section spaces and on wave operators.
We use the convention $\N=\{1,2,3,\ldots\}$ and $\N_0=\N \cup \{0\}$.

\subsection{Globally hyperbolic manifolds}
We summarize various facts about globally hyperbolic Lorentzian manifolds. 
For details the reader is referred to one of the classical textbooks \cite{BEE96,HE73,ON83}.
Throughout  this article, $M$ will denote a timeoriented Lorentzian manifold.
We use the convention that the signature of $M$ is $(-+\cdots+)$.
Note that we do not specify the dimension of $M$ nor do we assume orientability or connectedness.

A subset $\Sigma\subset M$ is called a \emph{Cauchy hypersurface} if every inextensible timelike curve in $M$ meets $\Sigma$ exactly once.
Any Cauchy hypersurface is a Lipschitz hypersurface of $M$.
All Cauchy hypersurfaces of $M$ are homeomorphic.

If a timeoriented Lorentzian manifold $M$ possesses a Cauchy hypersurface then $M$ is called \emph{globally hyperbolic}.
This class of Lorentzian manifolds contains many important examples:
Minkowski space, Friedmann models, the Schwarzschild model and deSitter spacetime are globally hyperbolic.

Bernal and S\'anchez proved an important structural result \cite[Thm.~1.1]{BS05}:
Any globally hyperbolic Lorentzian manifold has a \emph{Cauchy temporal function}.
This is a smooth function $t:M\to\R$ with past-directed timelike gradient $\na t$ such that the levels $t^{-1}(s)=:\Sigma_s$ are (smooth spacelike) Cauchy hypersurfaces if nonempty.
The Lorentzian metric of $M$ then takes the form 
\begin{equation}
g = -\beta dt^2 + g_t
\label{eq:gbeta}
\end{equation}
where $\beta$ is a positive smooth function on $M$ and $g_s$ denotes a Riemannian metric on $\Sigma_s$ depending smoothly on the parameter $s\in t(M)$.

From now on let $M$ always be globally hyperbolic.
For any $x\in M$ we denote by $J^+(x)$ the set all points that can be reached by future-directed causal curves emanating from $x$.
For any subset $A\subset M$ we put $J^+(A):=\bigcup_{x\in A}J^+(x)$.
If $A$ is compact, then $J^+(A)$ is closed.

We denote by $I^+(x)$ the set of all points in $M$ that can be reached by future-directed timelike curves emanating from $x$.
The set $I^+(x)$ is the interior of $J^+(x)$; in particular, it is an open subset of $M$.
For any subset $A\subset M$ the union $I^+(A):=\bigcup_{x\in A}I^+(x)$ is also open.

Interchanging the roles of future and past, we similarly define $J^-(x)$, $J^-(A)$, $I^-(x)$, and $I^-(A)$. 
Furthermore, we set $J(A):=J^+(A)\cup J^-(A)$.
A subset $A\subset M$ is called \emph{spatially compact} if $A$ is closed and there exists a compact subset $K\subset M$ with $A\subset J(K)$.
The intersection of any spatially compact subset and any Cauchy hypersurface is compact.

A closed subset $A\subset M$ is called \emph{past compact} if $A\cap J^-(x)$ is compact for every $x\in M$.
It then follows that $A\cap J^-(K)$ is compact for all compact subsets $K\subset M$.

A Lorentzian manifold $M$ is globally hyperbolic if and only if $J^+(x)\cap J^-(y)$ is compact for all $x,y\in M$ and there are no causal loops \cite{BS07}.
This is convenient if one wants to check that an open subset $N$ of a globally hyperbolic manifold $M$ is itself globally hyperbolic.
One only needs to check that for any $x,y\in N$ the set $J^+(x)\cap J^-(y)$ is contained in $N$. 
For instance, if $A\subset M$ is any subset of a globally hyperbolic manifold, then $I^+(A)$ and $I^-(A)$ are also globally hyperbolic.

\subsection{Partial Cauchy hypersurfaces}
We will need a relaxation of the concept of Cauchy hypersurfaces in order to properly formulate characteristic initial value problems.
A subset $\Sigma\subset M$ is called \emph{achronal} if every inextensible timelike curve in $M$ meets $\Sigma$ at most once (rather than exactly once as for Cauchy hypersurfaces).
A closed achronal subset $\Sigma\subset M$ which is a topological hypersurface will be called a \emph{partial Cauchy hypersurface}.
Every partial Cauchy hypersurface is a Lipschitz hypersurface, see \cite[Prop.~14.25]{ON83} and its proof.

Every Cauchy hypersurface is a partial Cauchy hypersurface.
If $A$ is a future set, i.e.\ $I^+(A)\subset A$, then its boundary $\Sigma=\partial A$ is a partial Cauchy hypersurface \cite[p.~415]{ON83}.
Typical examples are lightcones $\Sigma =  \partial J^+(p)$ or, more generally, $\Sigma=\partial J^+(B)$ for any subset $B\subset M$.
A similar remark applies to past sets.

For any partial Cauchy hypersurface $\Sigma$, the three sets $\Sigma$, $I^+(\Sigma)$ and $I^-(\Sigma)$ are mutually disjoint (because of achronality) and the union $I^+(\Sigma)\cup \Sigma \cup I^-(\Sigma)$ is an open subset of $M$ (because $\Sigma$ is a hypersurface).
Moreover, $J^\pm(\Sigma) = \Sigma\cup I^\pm(\Sigma)$.

Note that the definition of partial Cauchy hypersurfaces in \cite[p.~204]{HE73} is more restrictive than ours;
Hawking and Ellis demand that $\Sigma$ be acausal rather than achronal.
This would exclude lightlike partial Cauchy hypersurfaces which are precisely the ones we will be interested in.

\subsection{Smooth sections of vector bundles}
Let $S\to M$ be a (real or complex) vector bundle over $M$.
We denote the space of smooth sections of $S$ by $C^\infty(M;S)$ or briefly by $C^\infty(M)$ if the choice of $S$ is clear from the context.
Any connection $\na$ on $S$ induces, together with the Levi-Civita connection on $T^*M$, a connection on $T^*M^{\otimes \ell}\otimes S$ for any $\ell\in\N_0$.
For any $f\in C^\infty(M;S)$, the $\ell^\mathrm{th}$ covariant derivative $\na^{\ell}f := \na\cdots\na\na f$ is a smooth section of  $T^*M^{\otimes \ell}\otimes S$.

For any compact subset $K\subset M$, any $m\in\N_0$, any connection $\na$ on $S$ and any auxiliary norms $|\cdot |$ on $T^*M^{\otimes \ell}\otimes S$ we define the seminorm
\[
\|f\|_{K,m,\na,|\cdot |} := \max_{\ell=0,\cdots,m}\,\,\max_{x\in K} |\na^{\ell}f(x)|
\]
for $f\in C^\infty(M;S)$.
By compactness of $K$, different choices of $\na$ and $|\cdot |$ lead to equivalent seminorms.
For this reason, we may suppress $\na$ and $|\cdot |$ in the notation and write $\|f\|_{K,m}$ instead of $\|f\|_{K,m,\na,|\cdot |}$.
This family of seminorms is separating and turns $C^\infty(M;S)$ into a locally convex topological vector space.
If we choose a sequence $K_1 \subset K_2 \subset K_3 \subset \cdots\subset M$ of compact subsets with $\bigcup_{i=1}^\infty K_i =M$ and such that each $K_i$ is contained in the interior of $K_{i+1}$, then the countable subfamily $\|\cdot\|_{K_i,i}$ of seminorms is equivalent to the original family.
Hence $C^\infty(M;S)$ is metrizable.
An Arzel\'a-Ascoli argument shows that  $C^\infty(M;S)$ is complete.
Thus  $C^\infty(M;S)$ is a Fr\'echet space.
A sequence of sections converges in $C^\infty(M;S)$ if and only if the sections and all their (higher) derivatives converge locally uniformly.

For any closed subset $A\subset M$, we equip 
\[
C^\infty_A(M;S) := \{u\in C^\infty(M;S) \mid \supp(u)\subset A\}
\]
with the relative topology.
The space of all compactly supported smooth sections 
\[
C^\infty_c(M;S) := \bigcup_{K\subset M \atop \mathrm{compact}} C^\infty_K(M;S) 
\]
is equipped with the strict inductive limit topology.
This turns $C^\infty_c(\Sigma;S)$ into a locally convex topological vector space.
The inclusion maps $C^\infty_K(\Sigma;S) \hookrightarrow C^\infty_c(\Sigma;S)$ are continuous.
For any locally convex topological vector space $X$, a linear map $\Lambda:C^\infty_c(\Sigma;S)\to X$ is continuous if and only if the restriction of $\Lambda$ to any subspace $C^\infty_K(\Sigma;S)$ is continuous.
A sequence of sections converges in $C^\infty_c(M;S)$ if and only if the supports of the sections are contained in a common compact subset of $M$ and the sequence converges in $C^\infty(M;S)$.

In the same manner, we equip the space
\[
C^\infty_{sc}(M;S) := \bigcup_{A\subset M \atop \mathrm{spatially\,\, compact}} C^\infty_A(M;S) 
\]
of sections with spatially compact support with the strict inductive limit topology.

\subsection{Distributional sections}

Let $\dV$ be the volume element induced by the Lorentzian metric on $M$.
Again, let $S\to M$ be a real or complex vector bundle over $M$.
In the first case we write $\K=\R$ and in the latter case $\K=\C$.
Let $S^*\to M$ be the dual bundle, i.e., the fibers $S^*_x$ are the $\K$-dual spaces of the fibers $S_x$.

Compactly supported smooth sections of $S^*$ are called \emph{test sections} for $S$.
We denote by $\D'(M;S)$ (or briefly $\D'(M)$ if $S$ is clear from the context) the space of all continuous $\K$-linear functionals on $C^\infty_c(M;S^*)$ and call it the space of \emph{distributional sections} of $S$.
The evaluation of a distributional section $u$ on a test section $\phi$ will be denoted by $u[\phi]$.
Any locally integrable section $u$ can be considered as a distributional section by $u[\phi] = \int_M \phi(x)(u(x))\dV(x)$.
Here $\phi(x)(u(x))\in\K$ is the number obtained by evaluating the linear form $\phi(x)\in S^*_x$ on the element $u(x)\in S_x$.

\subsubsection{The topology on $\D'(M;S)$}
We provide $\D'(M;S)$ with the weak*-topology induced by the topology of $C^\infty_c(M;S^*)$.
Hence a sequence $(u_j)$ in $\D'(M;S)$ converges if and only if $u_j[\phi]$ converges for every test section $\phi\in C^\infty_c(M;S^*)$.

\subsubsection{The formally dual operator}
For any linear differential operator $P: C^\infty(M;S) \to  C^\infty(M;S)$ there is a unique \emph{formally dual operator} $P^\dagger: C^\infty(M;S^*) \to  C^\infty(M;S^*)$ of the same order, characterized by 
$$
\int_M \phi(Pu) \dV = \int_M (P^\dagger\phi)(u) \dV 
$$
for all $u\in C^\infty(M;S)$ and $\phi\in C^\infty(M;S^*)$ with $\supp(\phi) \cap \supp(u)$ compact.

\subsubsection{Extension to distributional sections}
The adjoint operator of $P^\dagger:C^\infty_c(M;S^*)\to C^\infty_c(M;S^*)$ extends $P$ to distributional sections.
In other words, the extension $P:\D'(M;S)\to\D'(M;S)$ is given by
\[
Pu[\phi] = u[P^\dagger\phi]
\]
where $u\in \D'(M;S)$ and $\phi\in C^\infty_c(M;S^*)$.
Any linear differential operator $P$ is continuous as an operator $C^\infty(M;S)\to C^\infty(M;S)$, as an operator $C^\infty_c(M;S)\to C^\infty_c(M;S)$, as an operator $C^\infty_{sc}(M;S)\to C^\infty_{sc}(M;S)$ and as an operator $\D'(M;S)\to\D'(M;S)$.

\subsection{Square integrable sections}

Now assume that the vector bundle $S$ comes equipped with a Riemannian or Hermitian metric $\<\cdot,\cdot\>$, antilinear in the first argument and linear in the second.
For $u,v\in C^\infty_c(M;S)$ we define the $L^2$-scalar product by 
\[
(u,v)_{L^2(M)}
:=
\int_M \<u(x),v(x)\> \dV(x) \,
\]
and the $L^2$-norm by
\[
\|u\|_{L^2(M)}
:=
\sqrt{(u,u)_{L^2(M)}} \, .
\]
The completion of $C^\infty_c(M;S)$ with respect to the $L^2$-norm will be denoted by $L^2(M;S)$.

\subsubsection{The formally adjoint operator}
For any linear differential operator $P: C^\infty(M;S) \to  C^\infty(M;S)$ there is a unique \emph{formally adjoint operator} $P^*: C^\infty(M;S) \to  C^\infty(M;S)$ of the same order, characterized by 
$$
(\phi,P\psi)_{L^2(M)} = (P^*\phi,\psi)_{L^2(M)}
$$
for all $\phi,\psi\in C^\infty(M;S)$ with $\supp(\phi) \cap \supp(\psi)$ compact.

\subsection{Sobolev spaces}
\label{subsec:Sobolev}
We introduce Sobolev spaces of sections on a manifold in a manner which will be convenient later.

\subsubsection{Compact manifolds}
We do it on compact manifolds first.
Let $\Sigma$ be a compact manifold without boundary.
Let $S\to \Sigma$ be a real or complex vector bundle.
We equip $\Sigma$ with an auxiliary Riemannian metric and $S$ with a Riemannian or Hermitian metric and a compatible connection $\nSt$.

Denote the formal adjoint of $\nSt:C^\infty(\Sigma;S)\to C^\infty(\Sigma,T^*\Sigma\otimes S)$ by  $\nSt^*: \mbox{$C^\infty(\Sigma,T^*\Sigma\otimes S)$} \to C^\infty(\Sigma;S)$.
The Laplace-type operator $\nSt^*\nSt+\id: C^\infty(\Sigma;S) \to C^\infty(\Sigma;S)$ is elliptic, positive and essentially selfadjoint in $L^2(\Sigma;S)$.
We denote the square root of the selfadjoint extension of $\nSt^*\nSt+\id$ by $\Db$.
Then we have for all $k\in\R$:
\begin{equation}
\Db^k = \big(\nSt^*\nSt+\id\big)^{k/2} .
\label{eq:Dk}
\end{equation}
For each $k\in\R$, $\Db^k$ is a positive, elliptic, selfadjoint classical pseudo-differential operator of order $k$.

We define the $k^\mathrm{th}$ \emph{Sobolev norm} of $u\in C^\infty(\Sigma;S)$ by
\[
\|u\|_{H^k(\Sigma)} := \|\Db^k u\|_{L^2(\Sigma)}
\]
and the \emph{Sobolev space} $H^k(\Sigma;S)$ as the completion of $C^\infty(\Sigma;S)$ with respect to \mbox{$\|\cdot\|_{H^k(\Sigma)}$}.
For $k=0$ we have $\|\cdot\|_{H^0(\Sigma)}=\|\cdot\|_{L^2(\Sigma)}$ and for $k=1$ we get
\begin{align*}
\|u\|_{H^1(\Sigma)}^2
&=
(\Db u,\Db u)_{L^2(\Sigma)}
=
(\Db^2 u,u)_{L^2(\Sigma)} \\
&=
((\nSt^*\nSt+\id) u,u)_{L^2(\Sigma)}
=
\|\nSt u\|_{L^2(\Sigma)}^2 + \|u\|_{L^2(\Sigma)}^2 \, .
\end{align*}
More generally, for $k\in\N_0$, a norm equivalent to $\|\cdot\|_{H^k(\Sigma)}$ is given by 
\[
\|u\|^2 = \sum_{\ell=0}^k \|\nSt^\ell u\|^2_{L^2(\Sigma)} \, .
\]
Different choices of metrics and connection lead to equivalent Sobolev norms and hence to the same Sobolev spaces.

\subsubsection{Noncompact manifolds}
Now we drop the assumption that $\Sigma$ is compact.
Let $K\subset \Sigma$ be a compact subset.
We want to define the space of Sobolev sections whose support is contained in $K$.

We choose a compact subset $K_1\subset\Sigma$ such that the interior of $K_1$ contains $K$ and the boundary $\partial K_1$ is smooth.
Now let $\Sigma'$ be the double of $K_1$ as a differentiable manifold.
In other words, $\Sigma' = K_1 \cup_{\partial K_1} K_2$ where $K_2$ is another copy of $K_1$ and the two copies are glued along their boundary $\partial K_1=\partial K_2$.

\begin{center}
\psset{unit=0.05Em}
\begin{pspicture}(100,400)(675,710)
\pscustom[linewidth=0.7,fillcolor=lightgray,fillstyle=solid] 
{\newpath
\moveto(345.8,650.9)                              
\curveto(347.3,650.8)(349.2,650.8)(351.8,650.9)
\curveto(358.1,651.2)(368.7,653.5)(377.6,654.6)
\curveto(395.4,656.7)(414.7,659.7)(420.8,660.8)
\curveto(433.1,663.2)(450,668.1)(463.5,672.3)      
\curveto(461.3,648.9)(448.3,648.2)(448.5,647.8)    
\curveto(444.2,624.7)(458.5,610)(458.5,610)
\curveto(449.4,615.1)(442.5,618.7)(435.5,621.9)   
\curveto(428.6,625.1)(421.5,627.9)(415.4,629.3)
\curveto(412.3,630)(395.1,632.9)(378.3,635.4)
\curveto(369.9,636.6)(361.6,637.7)(355.2,638.4)
\curveto(352,638.7)(349.2,639)(347.2,639)
\moveto(345.8,650.9)                             
\closepath 
}
\pscustom[linewidth=0.7,fillcolor=darkgray,fillstyle=solid] 
{\newpath
\moveto(463.5,672.3) 
\curveto(487.4,678.9)(507.8,684.8)(527.2,689)      
\curveto(546.5,693.3)(564.6,695.9)(578.8,694.7)
\curveto(634.9,689.9)(672.9,661.2)(662.8,630.6)
\curveto(655.7,609.1)(626.7,592.3)(590.5,585.9)
\curveto(575.3,583.2)(558.8,582.4)(542.1,583.8)
\curveto(528,585)(513.9,587.3)(501.1,590.7)
\curveto(488.3,594.1)(476.8,598.7)(467.6,604.5)
\curveto(465.4,606)(460.8,608.7)(458.5,610)               
\curveto(458.5,610)(444.2,624.7)(448.5,647.8)      
\curveto(448.3,648.2)(461.3,648.9)(463.5,672.3) 
\closepath                                        
}
\pscustom[linewidth=0.7]                                     
{\newpath
\moveto(379.5,654.5)
\curveto(377.5,654)(376,649.3)(376.2,644)
\curveto(376.4,639.3)(378,636)(380,635.5)
}
\pscustom[linewidth=0.7,linestyle=dashed,dash=1.2 1.7]       
{\newpath
\moveto(379.7,635.5)
\curveto(381.7,635.3)(383.4,639.4)(383.5,644.7)
\curveto(383.6,649.9)(381,654.4)(379,654.5)
\curveto(379.9,654.5)(379.8,654.5)(379.8,654.5)
}
\pscustom[linewidth=0.7,fillcolor=white,fillstyle=solid]
{\newpath
\moveto(556.2,637)
\curveto(563.9,645)(573.2,647.4)(581.9,647.3)
\curveto(590.5,647.2)(599.9,644.7)(606.2,637.5)
}
\psellipticarc[linewidth=0.7](580.5,647.25)(35,15){-179}{0}
\psellipticarc[linewidth=0.7,fillcolor=white,fillstyle=solid](580.5,647.25)(35,15){202}{340}
\psline[linecolor=white,linewidth=0.7](558,637)(604,637.5)

\uput{1}[0](410,644){\psframebox*[framearc=.3]{$K_1$}}                              
\uput{1}[0](510,640){\psframebox*[framearc=.3]{$K$}}
\uput{1}[45](660,675){$\Sigma$}

\pscustom[linewidth=0.7,fillcolor=lightgray,fillstyle=solid]
{\newpath
\moveto(379,463.4)                               
\curveto(364.3,463.6)(349.9,462)(347.3,461.3) 
\curveto(342.2,459.9)(336.3,457.1)(330.4,453.9)
\curveto(324.6,450.7)(318.8,447.1)(314,443.9)
\curveto(309.2,440.7)(305.4,438)(303.5,436.5)
\curveto(295.9,430.7)(286.2,426.1)(275.5,422.7)
\curveto(264.7,419.3)(253,417)(241.2,415.8)
\curveto(227.2,414.4)(213.3,415.2)(200.6,417.9)
\curveto(170.3,424.3)(146,441.1)(140,462.6)
\curveto(131.6,493.2)(163.4,521.9)(210.6,526.7)
\curveto(222.2,527.9)(237.5,525.3)(253.7,521)
\curveto(269.8,516.8)(287,510.9)(302.6,505.5)
\curveto(318.3,500.1)(332.5,495.2)(342.7,492.8)
\curveto(347.9,491.7)(364.1,489.5)(379,490.6)      
\curveto(395.4,489.9)(414.7,491.7)(420.8,492.8)  
\curveto(433.1,495.2)(450,500.1)(463.5,504.3)
\curveto(461.3,480.9)(448.3,480.2)(448.5,479.8)  
\curveto(444.2,456.7)(458.5,442)(458.5,442)
\curveto(449.4,447.1)(442.5,450.7)(435.5,453.9)  
\curveto(428.6,457.1)(421.5,459.9)(415.4,461.3)
\curveto(412.3,462)(395.1,463.7)(379,463.4)
}
\pscustom[linewidth=0.7,fillcolor=white,fillstyle=solid]
{\newpath
\moveto(182.4,471.5)
\curveto(187.7,478.7)(195.6,481.2)(202.8,481.3)
\curveto(210,481.4)(217.9,479)(224.3,471)
}
\psellipticarc[linewidth=0.7](204,481.5)(30,15){-179}{0}
\psellipticarc[linewidth=0.7,fillcolor=white,fillstyle=solid](204,481.5)(30,15){204}{333}  
\psline[linecolor=white,linewidth=0.7](184,471.5)(222,471)

\psellipticarc[linewidth=0.7,linestyle=dashed,dash=1.2 1.7](379,477)(5,13.5){-90}{90}   
\psellipticarc[linewidth=0.7](379,477)(5,13.5){90}{-90}                                 

\pscustom[linewidth=0.7,fillcolor=darkgray,fillstyle=solid]
{\newpath 
\moveto(463.5,504.3)                             
\curveto(487.4,510.9)(507.8,516.8)(527.2,521)   
\curveto(546.5,525.3)(564.6,527.9)(578.8,526.7) 
\curveto(634.9,521.9)(672.9,493.2)(662.8,462.6) 
\curveto(655.7,441.1)(626.7,424.3)(590.5,417.9) 
\curveto(575.3,415.2)(558.8,414.4)(542.1,415.8) 
\curveto(528,417)(513.9,419.3)(501.1,422.7)     
\curveto(488.3,426.1)(476.8,430.7)(467.6,436.5) 
\curveto(465.4,438)(460.8,440.7)(458.5,442)
\curveto(458.5,442)(444.2,456.7)(448.5,479.8)    
\curveto(448.3,480.2)(461.3,480.9)(463.5,504.3)
\closepath 
}
\pscustom[linewidth=0.7,fillcolor=white,fillstyle=solid]
{\newpath
\moveto(556.2,469) 
\curveto(563.9,477)(573.2,479.4)(581.9,479.3)
\curveto(590.5,479.2)(599.9,476.7)(606.2,469.5) 
}

\psellipticarc[linewidth=0.7](580.5,479.25)(35,15){-179}{0}  
\psellipticarc[linewidth=0.7,fillcolor=white,fillstyle=solid](580.5,479.25)(35,15){202}{340}  
\psline[linecolor=white,linewidth=0.7](558,469)(604,469.5)

\uput{1}[0](510,472){\psframebox*[framearc=.3]{$K$}}                        
\uput{1}[0](270,472){\psframebox*[framearc=.3]{$K_2$}}                        
\uput{1}[45](660,507){$\Sigma'$}

\end{pspicture}

\emph{Fig.~1}
\end{center}

Similarly, we double the restriction of the bundle $S$ to $K_1$ and obtain a bundle $S'\to\Sigma'$.
We extend the given metrics and connection on $K$, considered as a subset $K\subset K_1 \subset \Sigma'$, to smooth metrics and connection on $\Sigma'$.
Now we can consider any smooth section $u$ of $S$ over $\Sigma$ whose support is contained in $K$ also as a smooth section of $S'$ over $\Sigma'$.
We put
\[
\|u\|_{H^k(K)} := \|u\|_{H^k(\Sigma')} 
\]
and define $H^k_K(\Sigma;S)$ as the completion of $C^\infty_K(\Sigma;S)$ with respect to this norm.
Again, different choices (of $K_1$, metrics, connection) lead to equivalent norms and to the same Sobolev spaces.
If $K\subset K'$, then the inclusion $C^\infty_K(\Sigma;S) \subset C^\infty_{K'}(\Sigma;S)$ induces a continuous linear ``inclusion''  map $H^k_K(\Sigma;S) \hookrightarrow H^k_{K'}(\Sigma;S)$.

\subsubsection{Sobolev sections with compact support}\label{sss:SobCompSupp}
Next we define Sobolev spaces of sections with compact support, but without fixing the support.
We put\footnote{Strictly speaking, $H^k_c(\Sigma;S)$ is the direct limit of the direct system given by $\{H^k_K(\Sigma;S)\}_{K}$ and the inclusion maps.}
\[
H^k_c(\Sigma;S) := \bigcup_{K\subset\Sigma \atop \mathrm{compact}} H^k_K(\Sigma;S) \, .
\]
As for smooth sections, we equip $H^k_c(\Sigma;S)$ with the strict inductive limit topology.


\subsubsection{Sections which are locally Sobolev}
The pairing $C^\infty_K(\Sigma;S)\times C^\infty_c(\Sigma;S^*) \to \K$, $(u,\phi) \mapsto \int_\Sigma \<u,\phi\>\dS$, extends uniquely to a bicontinuous pairing $H^k_K(\Sigma;S)\times C^\infty_c(\Sigma;S^*) \to \K$.
Therefore we can consider Sobolev sections as distributional sections.
This yields a continuous embedding $H^k_K(\Sigma;S) \hookrightarrow \D'(\Sigma;S)$ for any compact subset $K\subset \Sigma$.
Thus we get a continuous embedding $H^k_c(\Sigma;S) \hookrightarrow \D'(\Sigma;S)$.
We put
\[
H^k\loc(\Sigma;S) := \{u\in\D'(\Sigma;S) \mid \chi u \in H^k_c(\Sigma;S) \mbox{ for all }\chi\in C^\infty_c(\Sigma,\R)\} \, .
\]
For every $\chi\in C^\infty_c(\Sigma,\R)$ we get the seminorm
\[
u \mapsto \|\chi u\|_{H^k(\supp(\chi))}
\]
on $H^k\loc(\Sigma;S)$.
We provide $H^k\loc(\Sigma;S)$ with the topology induced by these seminorms.
The same topology can be induced by a countable subfamily of seminorms.
Namely, choose a sequence of cutoff-functions $\chi_j$ such that the sets $\{\chi_j\equiv 1\}$ exhaust $M$.
The corresponding seminorms yield the same topology.
Hence $H^k\loc(\Sigma;S)$ is a Fr\'echet space.

Summarizing, we have the following chain of continuous inclusions
\[
C^\infty_c(\Sigma;S)
\subset
C^k_c(\Sigma;S)
\subset
H^k_c(\Sigma;S)
\subset
H^k\loc(\Sigma;S)
\subset
\D'(\Sigma;S)
\]
where the second space occurs only if $k\in\N_0$ while the Sobolev spaces are defined for all $k\in\R$.

\subsection{Finite energy sections}

Let $t:M\to\R$ be a Cauchy temporal function.
Fix $k\in\R$. 
The family $\{H^k\loc(\Sigma_s)\}_{s\in t(M)}$ is a bundle of Fr\'echet spaces over the interval $t(M)\subset \R$.
A global trivialization is given by parallel transport along the integral curves of the gradient vector field $\na t$.
We denote the space of $\ell$-times continuously differentiable sections of this bundle by $C^\ell(t(M),H^k\loc(\Sigma_\bullet))$.

\subsubsection{Embedding into $\D'(M;S)$}
Elements $u$ of $C^\ell(t(M),H^k\loc(\Sigma_\bullet))$ will be considered as distributional sections of $S$.
Namely, for any test section $\varphi\in C^\infty_c(M;S^*)$, the evaluation of $u$ on $\varphi$ is given by
\begin{equation}
u[\varphi] = \int_{t(M)} u(s)[(\beta^{1/2}\varphi)|_{\Sigma_s}] \, ds
\label{eq:SecDist}
\end{equation}
where $\beta:M\to\R$ is the function from \eqref{eq:gbeta}.
If $k\ge0$, then $u$ is locally integrable and we can rewrite \eqref{eq:SecDist} as
\[
u[\varphi] 
=
\int_{t(M)} \left(\int_{\Sigma_s} (\beta^{1/2}\varphi)|_{\Sigma_s}(x)\big(u(s)(x)\big) \, dA(x)\right) ds
=
\int_M \varphi u\, dV
\]
where $dA$ is the volume element of $\Sigma_s$ and $dV$ the one of $M$.
Here we observe $dV=\beta^{1/2}dA\,ds$ because of \eqref{eq:gbeta}.
This explains the factor $\beta^{1/2}$ in \eqref{eq:SecDist}.

\subsubsection{The topology of $C^\ell_{sc}(t(M),H^k(\Sigma_\bullet))$}
For any spatially compact subset $K\subset M$ we put
\[
C^\ell_K(t(M),H^k(\Sigma_\bullet))
 :=
\{u\in C^\ell(t(M),H^k\loc(\Sigma_\bullet))\mid \supp(u)\subset K\} .
\]
For any compact subinterval $I\subset t(M)$ we get the seminorm
\[
\|u\|_{I,K,\ell,k}
:=
\max_{i=0,\ldots,\ell} \,\,\max_{s\in I} \|\na^i_tu\|_{H^k(\Sigma_s)}
\]
on $C^\ell_K(t(M),H^k(\Sigma_\bullet))$.
Fixing $K$, $\ell$, and $k$, we let $I$ vary over all compact subintervals of $t(M)$ and turn $C^\ell_K(t(M),H^k(\Sigma_\bullet))$ into a Fr\'echet space.

Now we let $K$ vary over all spatially compact subsets of $M$ and we provide
\[
C^\ell_{sc}(t(M),H^k(\Sigma_\bullet))
:=
\bigcup_{K\subset M \atop {\mathrm{spatially} \atop \mathrm{compact}}} C^\ell_K(t(M),H^k(\Sigma_\bullet))
\]
with the strict inductive limit topology of locally convex spaces.
Hence the inclusion maps 
\[
C^\ell_K(t(M),H^k(\Sigma_\bullet)) \hookrightarrow C^\ell_{sc}(t(M),H^k(\Sigma_\bullet))
\] 
are continuous and for any locally convex topological vector space $X$, a linear map 
\[
\Lambda: C^\ell_{sc}(t(M),H^k(\Sigma_\bullet)) \to X
\] 
is continuous if and only if the restrictions of $\Lambda$ to all $C^\ell_K(t(M),H^k(\Sigma_\bullet))$ are continuous.

\begin{dfn}
For any $k\in\R$ we call
\[
\FE^k(M,t;S) 
:=
C^0_{sc}(t(M),H^k(\Sigma_\bullet)) \cap C^1_{sc}(t(M),H^{k-1}(\Sigma_\bullet))
\]
the space of \emph{finite $k$-energy sections}.
\end{dfn}

\subsubsection{The topology of $\FE^k(M,t;S)$}
Note that this space depends on the choice of Cauchy temporal function $t$.
A base of the topology of $\FE^k(M,t;S)$ is given by the sets of the form $U_0\cap U_1$ where $U_0$ is an open subset of $C^0_{sc}(t(M),H^k(\Sigma_\bullet))$ and $U_1$ is open in $C^1_{sc}(t(M),H^{k-1}(\Sigma_\bullet))$.
A map from a topological space 
\[
f: X \to \FE^k(M,t;S)
\] 
is continuous if and only if $f$ is continuous as a map $X \to C^0_{sc}(t(M),H^k(\Sigma_\bullet))$ and as a map $X \to C^1_{sc}(t(M),H^{k-1}(\Sigma_\bullet))$.
If we choose $f$ as the identity map on $\FE^k(M,t;S)$ we see that the inclusion maps $\FE^k(M,t;S)\hookrightarrow C^0_{sc}(t(M),H^k(\Sigma_\bullet))$ and $\FE^k(M,t;S)\hookrightarrow C^1_{sc}(t(M),H^{k-1}(\Sigma_\bullet))$ are continuous.

\subsubsection{The space $L^2\locsc(t(M),H^k(\Sigma_\bullet))$}
We will also need the space $L^2\locsc(t(M),H^k(\Sigma_\bullet))$ of $L^2\loc$-sections with spatially compact support.
By this we mean the following:
For any spatially compact subset $K\subset M$ the elements of $L^2\locK(t(M),H^k(\Sigma_\bullet))$ are those sections $u$ of the bundle $\{H^k\loc(\Sigma_s)\}_{s\in t(M)}$ for which we have
\begin{itemize}
\item 
$\supp(u(s)) \subset K\cap \Sigma_s$ for almost all $s\in t(M)$;
\item
the function $s\mapsto u(s)[\phi|_{\Sigma_s}]$ is measurable for any test section $\phi\in C^\infty_c(M;S^*)$;
\item
the function $s\mapsto \|u(s)\|_{H^k(\Sigma_s)}$ is $L^2\loc$.
\end{itemize}
This implies that the functions $s\mapsto u(s)[\phi|_{\Sigma_s}]$ are square integrable because they are compactly supported in $s$ and we have 
\[
|u(s)[\phi|_{\Sigma_s}]| 
\le 
C_1\cdot \|u(s)\|_{H^k(\Sigma_s)}\cdot \|\phi|_{\Sigma_s}\|_{H^{-k}(\Sigma_s)}
\le 
C_2(\phi)\cdot \|u(s)\|_{H^k(\Sigma_s)}.
\]
Again, we have an embedding $L^2\locK(t(M),H^k(\Sigma_\bullet)) \hookrightarrow \D'(M;S)$ via \eqref{eq:SecDist}.
The space $L^2\locK(t(M),H^k(\Sigma_\bullet))$ is topologized by the seminorms 
\[
\|u\|^2_{I,K,k} := \int_I \|u(s)\|^2_{H^k(\Sigma_s)}\, ds
\]
where $I$ runs through all compact subintervals of $t(M)$.
This turns $L^2\locK(t(M),H^k(\Sigma_\bullet))$ into a Fr\'echet space.
The space 
\[
L^2\locsc(t(M),H^k(\Sigma_\bullet))
:=
\bigcup_{K\subset M \atop {\mathrm{spatially} \atop \mathrm{compact}}} L^2\locK(t(M),H^k(\Sigma_\bullet))
\]
carries the strict inductive limit topology.

\begin{lem}\label{lem:Dicht}
Let $k\in\R$.
The space of smooth sections $C^\infty_{sc}(M;S)$ is dense in $L^2\locsc(t(M),H^k(\Sigma_\bullet))$.

If $K,K'\subset M$ are spatially compact and the interior of $K'$ contains $K$, then any $u\in L^2\locsc(t(M),H^k(\Sigma_\bullet))$ with $\supp(u)\subset K$ can be approximated by smooth sections with support contained in $K'$.
\end{lem}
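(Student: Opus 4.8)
The plan is to deduce the first statement from the second. A smooth section with spatially compact support restricts on each slice to a compactly supported smooth section, hence lies in $H^k\loc(\Sigma_s)$, and it depends smoothly (in particular continuously, hence $L^2\loc$) on $s$; so $C^\infty_{sc}(M;S)$ is indeed contained in $L^2\locsc(t(M),H^k(\Sigma_\bullet))$. Assuming the second statement, take any $u$ with $\supp(u)\subset K$ spatially compact and pick a spatially compact $K'$ whose interior contains $K$. The approximating smooth sections then lie in $C^\infty_{K'}(M;S)\subset C^\infty_{sc}(M;S)$ and converge to $u$ in the Fr\'echet space $L^2_{\mathrm{loc},K'}(t(M),H^k(\Sigma_\bullet))$; by the definition of the strict inductive limit topology this is convergence in $L^2\locsc$. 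Since every element of $L^2\locsc(t(M),H^k(\Sigma_\bullet))$ lies in some $L^2\locK(t(M),H^k(\Sigma_\bullet))$, the first statement follows.

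For the second statement I would first localize. The gradient flow of $t$ identifies $M$ with a product $t(M)\times\Sigma$ carrying the slices $\Sigma_s$ as $\{s\}\times\Sigma$; choosing a locally finite atlas of charts on $\Sigma$ trivializing $S$, a subordinate partition of unity $\{\chi_i\}$, and a locally finite partition of unity $\{\psi_j\}$ of $t(M)$, it suffices to approximate each piece $\chi_i\psi_j u$. Multiplication by $\chi_i$ and $\psi_j$ is bounded on the relevant Sobolev spaces for every $k\in\R$, and each piece is supported in one product chart $I_j\times\Omega_i\subset\R_s\times\R^n_x$ and, over any compact time interval, in a compact subset of it. In such a chart the cut-off intrinsic norm $\|\cdot\|_{H^k(\Sigma_s)}$ is equivalent to the flat norm $\|\cdot\|_{H^k(\R^n)}$, uniformly for $s$ in any compact subinterval because $g_s$ and the bundle data depend smoothly on $s$; thus $\|\cdot\|_{I,K',k}$ is, piecewise, comparable to $\big(\int_I\|\cdot\|_{H^k(\R^n)}^2\,ds\big)^{1/2}$.

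This reduces matters to the model problem: approximate $v\in L^2(\R_s,H^k(\R^n))$ with support in a fixed compact set by smooth functions. For this I would use the joint space-time mollification $v_\eps=v*\rho_\eps$ with $\rho_\eps(s,x)=\eps^{-(n+1)}\rho(s/\eps,x/\eps)$, where $\rho\in C^\infty_c(\R^{n+1})$ and $\int\rho=1$. Each $v_\eps$ is smooth and supported in the $\eps$-neighbourhood of $\supp(v)$, hence inside the chart image of $K'$ once $\eps$ is small. Convolution in $x$ acts on the Fourier side as multiplication by $\hat\rho(\eps\cdot)$, so $\|v*\rho_\eps-v\|_{H^k(\R^n)}\to0$ for every real $k$ by dominated convergence, while convolution in $s$ is the usual $L^2$-mollification; combining them gives $v_\eps\to v$ in $L^2(I,H^k(\R^n))$ for every compact $I$. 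Transporting the pieces back to $M$ (allowing the width $\eps=\eps_{ij}$ to shrink from chart to chart so that all supports stay inside $K'$) and summing the resulting locally finite family produces $u_\eps\in C^\infty_{K'}(M;S)$ with $u_\eps\to u$ in every seminorm $\|\cdot\|_{I,K',k}$.

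The step I expect to require the most care is the uniform comparison of the intrinsic slice norm $\|u(s)\|_{H^k(\Sigma_s)}=\|\Db_s^k\,u(s)\|_{L^2(\Sigma_s)}$ with a flat local norm. For integer $k\ge0$ one can use the equivalent norm $\sum_{\ell\le k}\|\nSt^\ell\cdot\|_{L^2}$ and elementary estimates in coordinates; for general real $k$ one must instead invoke the mapping properties of the order-$k$ operator $\Db_s$ between localized Sobolev spaces and check that the implied constants are locally bounded in $s$. Here the smooth dependence of the metrics $g_s$, of $\beta$, and of the connection on the parameter $s$ in \eqref{eq:gbeta} is essential, as it furnishes uniform bounds on every compact time interval and thereby lets a single mollification scheme approximate $u$ simultaneously in all the seminorms defining the topology.
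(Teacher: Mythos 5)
Your proof is correct, but it follows a genuinely different route from the one in the paper. You localize: the gradient flow of $t$ gives a product structure, partitions of unity $\{\chi_i\}$ and $\{\psi_j\}$ in space and time cut $u$ into pieces supported in product charts, the intrinsic norms $\|\cdot\|_{H^k(\Sigma_s)}$ are compared with flat norms (uniformly on compact time intervals), and each piece is approximated by ordinary space-time mollification in $\R^{n+1}$, the supports being kept inside $K'$ by shrinking the mollification widths chart by chart. The paper never localizes: it reduces to spatially compact $M$ by the doubling trick of Subsection~\ref{subsec:Sobolev}, smooths in the spatial directions with the heat semigroup of the very operator that defines the Sobolev norms, setting $u_\eps(s):=(\chi|_{\Sigma_s})\cdot e^{-\eps\Db^2}(u(s))$ with a cutoff $\chi\equiv1$ on $K$ supported in the interior of $K'$, and then smooths in time by convolving with a mollifier after parallel transport along the gradient lines of $t$. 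The temporal steps of the two arguments are essentially the same (vector-valued mollification); the difference is the spatial step, and the paper's choice buys exactly the point you flag as delicate: since $\Db$ defines $\|\cdot\|_{H^k(\Sigma_s)}$, the convergence $e^{-\eps\Db^2}u(s)\to u(s)$ in $H^k(\Sigma_s)$ holds for every real $k$ directly by the spectral calculus (the semigroup commutes with $\Db^k$), so no comparison of intrinsic with coordinate $H^k$-norms is ever needed; in your argument that uniform equivalence --- for non-integer $k$ a genuinely pseudodifferential fact, made uniform in $s$ via the smooth dependence of $g_s$, $\beta$ and the connection --- is the technical heart, and you assert rather than prove it. Conversely, your route is more elementary in its analytic core (Euclidean mollifiers, Fourier multipliers, finite propagation of supports instead of a cutoff), and it makes explicit the deduction of the density statement from the support-controlled statement, which the paper leaves implicit; just note that assembling the locally finite family of mollified pieces with widths $\eps_{ij}$ into an honest approximating sequence still requires a small diagonal argument over an exhaustion of $t(M)$ by compact intervals, whereas the paper's global construction yields the approximants directly in the two parameters $(\eps,\delta)$.
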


\begin{proof}
W.l.o.g.\ we assume that $M$ is spatially compact, otherwise use the doubling trick described in Subsection~\ref{subsec:Sobolev}.
Let $u\in L^2\locsc(t(M),H^k(\Sigma_\bullet))$ with $\supp(u)\subset K$.
Let $\chi\in C^\infty_c(M,\R)$ be a cutoff function with $\chi\equiv1$ on $K$ and $\supp(\chi)\subset \mathrm{interior}(K')$.

We first mollify in spatial directions by setting
\[
u_\eps(s) := (\chi|_{\Sigma_s}) \cdot e^{-\eps\Db^2}(u(s)).
\]
Then $u_\eps(s)\in C^\infty(\Sigma_s)$ and $u_\eps(s) \to u(s)$ in $H^k(\Sigma_s)$ as $\eps\searrow0$, locally uniformly in $s$.

Now we fix a compact subinterval $I\subset t(M)$.
We choose a nonnegative function $\rho\in C^\infty_c(\R,\R)$ such that $\rho(s)=0$ for $|s|\ge1$ and $\int_\R \rho(s)ds=1$.
We put
\[
u_{\delta,\eps}(s) 
:= 
\frac{1}{\delta}\int_I \rho\bigg(\frac{s-\sigma}{\delta}\bigg) \Pi^\sigma_s (u_\eps(\sigma))\, d\sigma .
\]
Here $\Pi^\sigma_s (u_\eps(\sigma))$ is the parallel translate of $u_\eps(\sigma)$ to $\Sigma_s$ along the gradient lines of the Cauchy temporal function $t$.
Then $u_{\delta,\eps}\in C^\infty(M;S)$ and $u_{\delta,\eps} \to u_\eps$ in $L^2(I,H^k(\Sigma_\bullet))$ as $\delta\searrow0$.
For any $\eps$ and sufficiently small $\delta$ we have $\supp(u_{\delta,\eps})\subset K'$.
\end{proof}

\subsubsection{Dependence on the time function}\label{sss:DepTime}
The space $\FE^k(M,t;S)$ depends in general on the choice of Cauchy temporal function.
For example, let $M$ be the $(1+1)$-dimensional Minkowski space with standard coordinates $x_0,x_1$.
We choose $t=x_0 : M \to \R$.
Now let $g\in C^\infty_c(\R,\R)$ be such that $g\equiv 1$ on $[-2,2]$.
Let $f\in C^1(\R,\R)$ such that $f$ is nowhere $C^2$.
Then the function $u:M\to\R$, $u(t,x)=f(t)g(x)$, is in $C^1(\R,H^k(\R))$ for all $k\in\R$.

Now we change the time function by composing with a nontrivial Lorentz boost $L=\begin{pmatrix} \cosh(\theta) & \sinh(\theta) \\ \sinh(\theta) & \cosh(\theta) \end{pmatrix}$ where $\theta\neq 0$.
The transformed function $\tilde u(t,x) = (u\circ L)(t,x) = f(\cosh(\theta)t + \sinh(\theta)x)g(\sinh(\theta)t + \cosh(\theta)x)$ is no longer in $C^1(\R,H^k(\R))$ for $k>2+\tfrac12$.
Namely, if it were, then for constant $t$ near $0$ the function $x\mapsto \tilde u(t,x) = f(\cosh(\theta)t + \sinh(\theta)x)$, $x\in [-1,1]$, would be $C^2$ by the Sobolev embedding theorem, contradicting the assumption on $f$.

A similar argument shows that the space $L^2\locsc(t(M),H^k(\Sigma_\bullet))$ depends on the Cauchy temporal function $t$ if $k>0$.
For $k=0$ the situation is different.
Namely
\[
L^2\locsc(t(M),H^0(\Sigma_\bullet))
=
L^2\locsc(t(M),L^2(\Sigma_\bullet))
=
L^2\locsc(M)
\]
because
\[
\| \, \|u(s)\|^2_{L^2(\Sigma_s)} \, \|^2_{L^2(I)}
= 
\int_I \int_{\Sigma_s} |u|^2 dA\, ds
\sim
\int_I \int_{\Sigma_s} \beta^{1/2} |u|^2 dA\, ds
=
\int_{t^{-1}(I)} |u|^2 dV \, .
\]
Here $I$ is any compact subinterval of $t(M)$ and ``$\sim$'' means that both sides can be estimated against each other up to multiplicative constants depending on $I$ and the support of $u$ but not on $u$ itself.
Thus $L^2\locsc(t(M),H^0(\Sigma_\bullet)) = L^2\locsc(M)$ does not depend on the choice of $t$.

\subsubsection{The spaces $\FE^k(M,P,t)$ and $\FE^k(M,\ker(P),t)$}
We will have to apply differential operators to sections in $\FE^k(M,t)$ and will have to make sure that they map them to sections of the right regularity.
This requires the introduction of suitable subspaces of $\FE^k(M,t)$.

\begin{dfn}
Let $P$ be a linear differential operator (with smooth coefficients) acting on sections of $S$.
We put
\begin{align*}
\FE^k(M,P,t) &:= \FE^k(M,t;S) \cap P^{-1}(L^2\locsc(t(M),H^{k-1}(\Sigma_\bullet))), \\
\FE^k(M,\ker(P),t) &:= \FE^k(M,t;S) \cap \ker(P).
\end{align*}
Elements of the latter space will be called \emph{finite $k$-energy solutions} of $P$.
Here $P^{-1}(L^2\loc(t(M),H^{k-1}(\Sigma_\bullet)))$ and $\ker(P)$ denote the spaces of all $u\in\D'(M;S)$ such that $Pu\in L^2\loc(t(M),H^{k-1}(\Sigma_\bullet))$ and $Pu=0$ in the distributional sense, respectively.
\end{dfn}
A base of the topology of $\FE^k(M,P,t)$ is given by the sets $U\cap P^{-1}(V)$ where $U$ is an open subset of $\FE^k(M,t;S)$ and $V$ is an open subset of $L^2\loc(t(M),H^{k-1}(\Sigma_\bullet))$.
A map from a topological space 
\[
f: X \to \FE^k(M,P,t)
\] 
is continuous if and only if $f$ is continuous as a map $X \to \FE^k(M,t;S)$ and $P\circ f:X\to C^0_{sc}(t(M),H^{k-1}(\Sigma_\bullet))$ is continuous.
In particular, $P:\FE^k(M,P,t)\to L^2\loc(t(M),H^{k-1}(\Sigma_\bullet))$ is continuous.

Although $\FE^k(M,P,t)$ does not carry the relative topology induced by $\FE^k(M,t;S)$ (the topology of $\FE^k(M,P,t)$ is finer), both spaces, $\FE^k(M,t;S)$ and $\FE^k(M,P,t)$, induce the same relative topology on $\FE^k(M,\ker(P),t)$.

\subsubsection{Dependence on the time function}
In general, the spaces $\FE^k(M,P,t)$ do depend on the choice of Cauchy temporal function $t$.
To see this, let $M$ again be the $(1+1)$-dimensional Minkowski space with standard coordinates $x_0,x_1$ and $t=x_0 : M \to \R$.
Let $P=-\frac{\partial^2}{\partial x_0^2} + \frac{\partial^2}{\partial x_1^2}$.
Let $g\in C^\infty_c(\R,\R)$ be such that $g\equiv 1$ on $[-2,2]$.
Let $f\in C^2(\R,\R)$ such that $f$ is nowhere $C^3$.
Then the function $u:M\to\R$, $u(t,x)=f(t)g(x)$, is in $C^2(\R,H^k(\R))$ for all $k\in\R$ and $Pu \in C^0(\R,H^k(\R))$ for all $k\in\R$.
Thus $u\in \FE^k(M,P,t)$ for all $k\in\R$.

Again, we change the time function by composing with a nontrivial Lorentz boost.
The transformed function $\tilde u(t,x) = (u\circ L)(t,x) = f(\cosh(\theta)t + \sinh(\theta)x)g(\sinh(\theta)t + \cosh(\theta)x)$ is no longer in $C^0(\R,H^k(\R))$ for $k>3+\tfrac12$.
Namely, if it were, then for constant $t$ near $0$ the function $x\mapsto \tilde u(t,x) = f(\cosh(\theta)t + \sinh(\theta)x)$, $x\in [-1,1]$, would be $C^3$ by the Sobolev embedding theorem, contradicting the assumption on $f$.
Thus $\tilde u\notin \FE^k(M,P,t)$ for $k>3+\tfrac12$.

On the other hand, if $P$ is a wave operator, \cref{cor:FEPIndep} will show that the space $\FE^1(M,P,t)$ is independent of the choice of $t$.
Moreover, by \cref{cor:FEkerPIndep}, all spaces $\FE^k(M,\ker(P),t)$ are independent of $t$.

\subsection{Wave operators}

Let $M$ be equipped with a Lorentzian metric $g$ and let $S\to M$ be a real or complex vector bundle.
A linear differential operator of second order $P:C^\infty(M;S)\to C^\infty(M;S)$ is called a \emph{wave operator} if its principal symbol is given by $g$.
In other words, $P$ locally takes the form
\[
P = -\sum_{ij} g^{ij} \frac{\partial^2}{\partial x^i\partial x^j} + \mbox{ lower order terms} \,.
\]
Here $(g^{ij})$ is the matrix inverse to $(g_{ij})$ where $g_{ij}=g(\partial/\partial x^i,\partial/\partial x^j)$.

If $P$ is a wave operator, then so are its formally dual operator $P^\dagger$ and its formally adjoint operator $P^*$ (if $S$ carries a metric).

\begin{ex}
Let $S$ be the trivial line bundle so that sections of $S$ are just functions.
The \emph{d'Alembert operator} $P=\Box=-\mathrm{div}\circ\mathrm{grad}$ is a formally selfadjoint wave operator, see e.g.\ \cite[p.~26]{BGP07}.
Similarly, the \emph{Klein-Gordon operator} $P=\Box + m^2$ and the \emph{covariant Klein-Gordon operator}  $P= \Box + m^2 + \xi\cdot\mathrm{scal}$ are formally selfadjoint wave operators.
Here $m$ and $\xi$ are constants and $\mathrm{scal}$ denotes the scalar curvature of $M$.
\end{ex}

\begin{ex}\label{ex:connectiondAlembert}
More generally, let $S$ be any vector bundle and let $\nabla$ be a connection on $S$.
This connection and the Levi-Civita connection on $T^*M$ induce a connection on $T^*M\otimes S$, again denoted $\nabla$.
We define the \emph{connection-d'Alembert} operator $\Box^\nabla$ to be the composition of the following three maps
$$
 C^\infty(M;S) \xrightarrow{\nabla}
 C^\infty(M,T^*M\otimes S) \xrightarrow{\nabla}
 C^\infty(M,T^*M\otimes T^*M\otimes S)
\xrightarrow{-\tr\otimes\id_S} 
 C^\infty(M;S)
$$
where $\tr:T^*M\otimes T^*M \to \R$ denotes the metric trace,
$\tr(\xi\otimes\eta)=\<\xi,\eta\>$. 
This connection-d'Alembert operator $\Box^\nabla$ is a wave operator.
\end{ex}

\begin{ex}
Let $S=\Lambda^kT^*M$ be the bundle of $k$-forms.
Exterior differentiation $d: C^\infty(M,\Lambda^kT^*M) \to C^\infty(M,\Lambda^{k+1}T^*M)$ increases the degree by one while
the codifferential $\delta: C^\infty(M,\Lambda^{k}T^*M) \to C^\infty(M,\Lambda^{k-1}T^*M)$ decreases the degree by one.
While $d$ is independent of the metric, the codifferential $\delta$ does depend on the Lorentzian metric.
The \emph{Hodge-d'Alembert operator} $P=-d\delta - \delta d$ is a wave operator.
It is formally selfadjoint with respect to the indefinite metric on $S$ induced by the Lorentzian metric on $TM$.
\end{ex}


\section{The energy estimate}

Let $(M,g)$ be a globally hyperbolic Lorentzian manifold of dimension $n+1$. 
Let $S\to M$ be a real or complex vector bundle and let $P$ be a linear wave operator with smooth coefficients acting on sections of $S$.
We equip $S$ with a metric $\<\cdot,\cdot\>$, the associated norm $|\cdot|$ and a compatible connection $\na$.

\begin{dfn}
Let $\Sigma\subset M$ be a smooth spacelike Cauchy hypersurface, let $\nu$ be the future-directed timelike unit-normal field along $\Sigma$ (in particular, $g(\nu,\nu)=-1$), and let $u$ be a sufficiently differentiable section of $S$ defined on a neighborhood of $\Sigma$.
For $k\in\R$ we call
\[
E_k(u,\Sigma) := \|u|_\Sigma\|^2_{H^{k}(\Sigma)} + \|\na_\nu u\|^2_{H^{k-1}(\Sigma)}
\]
the \emph{$k$-energy} of $u$ along $\Sigma$.
\end{dfn}

Given a Cauchy temporal function $t$, we briefly write
\[
E_k(u,s) 
:= 
E_k(u,\Sigma_s)
=
\|u|_{\Sigma_{s}}\|^2_{H^{k}(\Sigma_s)} + \|\beta^{-1/2}\na_t u\|^2_{H^{k-1}(\Sigma_s)}.
\]

\begin{thm}[Energy estimate]\label{thm:enest}
Let $[T_0,T_1]\subset t(M)$, let $K\subset M$ be compact, and let $k\in\R$.
Then there exists a constant $C>0$ such that 
\begin{equation}
E_k(u,t_1)
\leq
E_k(u,t_0) \cdot e^{C(t_1-t_0)} + \int_{t_0}^{t_1} e^{C(t_1-s)} \|Pu\|^2_{H^{k-1}(\Sigma_s)}\, ds 
\label{eq:EnEst}
\end{equation}
holds for all $t_0,t_1\in[T_0,T_1]$ with $t_0<t_1$ and for all $u\in \FE^{k+1}(M,t;S)$ with $\supp(u)\subset J(K)$ and $Pu\in C^0_{sc}(t(M),H^{k-1}(\Sigma_\bullet))$.
\end{thm}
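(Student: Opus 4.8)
The plan is to derive \eqref{eq:EnEst} from a pointwise-in-$s$ differential inequality for the energy together with Gr\"onwall's lemma. Concretely, I would show that there is a constant $C>0$, depending only on $[T_0,T_1]$, $K$ and $k$, such that
\[
\dds E_k(u,s) \le C\, E_k(u,s) + \|Pu\|^2_{H^{k-1}(\Sigma_s)}
\]
for $s\in[T_0,T_1]$. Applying the integral form of Gr\"onwall's inequality to $s\mapsto E_k(u,s)$ then produces exactly the right-hand side of \eqref{eq:EnEst}. This is where the support hypothesis enters: since $K$ is compact, $J(K)$ is spatially compact, so the region $R:=t^{-1}([T_0,T_1])\cap J(K)$ is compact. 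Because $\supp(u)\subset J(K)$, each slice datum $u|_{\Sigma_s}$ is supported in the compact set $R\cap\Sigma_s$, and all the geometric quantities appearing below --- $\beta$ and its derivatives, the volume distortion of $dA$ along the flow of $\na t$, and the full symbol of the family $\Db_s$ --- are bounded on $R$ together with their $s$-derivatives. This uniform boundedness is what makes $C$ independent of $u$, $t_0$ and $t_1$.

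To produce the differential inequality I would split the wave operator along the foliation. By \eqref{eq:gbeta} one has $g^{tt}=-\beta^{-1}$ and $g^{ab}=g_s^{ab}$, so
\[
Pu = \beta^{-1}\na_t^2 u + A_s u,
\]
where $A_s$ is a second-order operator along $\Sigma_s$, depending smoothly on $s$, whose principal part coincides with that of the positive Laplace-type operator $\nSt^*\nSt$. Setting $w:=\na_t u$ and realising the Sobolev norms through $\Db_s$, so that
\[
E_k(u,s)=\|\Db_s^{k}u\|^2_{L^2(\Sigma_s)}+\|\Db_s^{k-1}(\beta^{-1/2}w)\|^2_{L^2(\Sigma_s)},
\]
I would differentiate in $s$. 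Three kinds of terms arise: dynamical terms in which $\na_t$ hits $u$ or $w$, the latter rewritten through the equation as $\na_t w=\na_t^2u=\beta\,Pu-\beta A_s u$; geometric terms from the $s$-dependence of $dA$, of $\beta$ and of the family $\Db_s$; and commutator terms from moving $\Db_s$ past $\na_t$ and past $A_s$. The crucial cancellation is at top order: differentiating the first summand yields $+2\Re(\Db_s^{2k}u,w)_{L^2(\Sigma_s)}$, while the $-\beta A_s u$ part of $\na_t w$ contributes $-2\Re(\Db_s^{2k}u,w)_{L^2(\Sigma_s)}$ modulo lower order --- using that $A_s\equiv\Db_s^2$ up to order one, that the weights $\beta^{\pm1/2}$ commute with $\Db_s$ up to lower order, and that $A_s$, $\Db_s$ are self-adjoint. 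These two terms cancel; this is the classical energy identity in disguise. After using that the commutators $[\Db_s^{k},\na_t]$, $[\Db_s^{k},A_s]$ and those of $\Db_s^k$ with $\beta^{\pm1/2}$ all drop order by one, every surviving contribution is a bilinear expression in $(u,w)$ bounded by the product of the $H^k(\Sigma_s)$-norm of $u$ and the $H^{k-1}(\Sigma_s)$-norm of $w$ (or by the square of one of these), hence by $C\,E_k(u,s)$; the source term is absorbed by Cauchy--Schwarz into $C\,E_k(u,s)+\|Pu\|^2_{H^{k-1}(\Sigma_s)}$.

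Two points need care. The differentiation and integrations by parts just sketched are classical when $u$ is smooth, so I would first establish the inequality for smooth, spatially compact $u$ and then pass to the general case $u\in\FE^{k+1}(M,t;S)$ with $Pu\in C^0_{sc}(t(M),H^{k-1}(\Sigma_\bullet))$ by approximation. This regularity is exactly what is needed: it makes $s\mapsto E_k(u,s)$ well defined and, via the split equation $\na_t^2u=\beta\,Pu-\beta A_s u$, gives $\na_t^2u\in C^0(H^{k-1})$, so that $s\mapsto E_k(u,s)$ is locally absolutely continuous and the inequality holds almost everywhere, which suffices for Gr\"onwall. The approximation can be carried out by an explicit slice-wise mollification of the type used in the proof of \lref{lem:Dicht}, arranged so that supports stay in a fixed spatially compact neighbourhood; one checks that both sides of \eqref{eq:EnEst} converge, the commutator of $P$ with the mollifier producing only errors that tend to $0$. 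The main obstacle, and the genuinely technical part, is the uniform control of the geometric and commutator terms for arbitrary real $k$. For non-integer or negative $k$ the operators $\Db_s^{k}$ are honestly pseudo-differential, and one must know that $\partial_s\Db_s^{k}$ is again pseudo-differential of order $k$ with symbol depending continuously on $s$, and that the above commutators drop order by one; only then are all error terms majorised by $E_k(u,s)$ uniformly over $R$. The leading-order cancellation is robust, but verifying that every remaining term is truly energy-bounded is where the real work lies.
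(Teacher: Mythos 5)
Your core computation is the same as the paper's: split $P$ along the foliation, differentiate the two summands of $E_k(u,s)$, use the equation to replace $\na_t\na_t u$, observe that the dangerous top-order terms $\pm2\Re(\Db^{k}u,\Db^{k}\na_tu)_{L^2(\Sigma_s)}$ cancel, bound everything else by commutator arguments (each commutator dropping order by one), and finish with Gr\"onwall. One minor imprecision: your splitting $P=\beta^{-1}\na_t^2+A_s$ with $A_s$ purely spatial is not general, since a wave operator may contain a first-order term in $\na_t$; the correct decomposition is $P=\beta^{-1}\na_t\na_t+\Db^2+\Zb\na_t+\Fb$ with $\Zb$ of order zero. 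This is harmless, as that term is again energy-bounded.

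The substantive difference is your detour through smooth sections. The paper runs the computation \emph{directly} on $u\in\FE^{k+1}(M,t;S)$ with $Pu\in C^0_{sc}(t(M),H^{k-1}(\Sigma_\bullet))$: these hypotheses are exactly what makes every step legitimate ($u(s)\in H^{k+1}(\Sigma_s)$ justifies forming $\Db^{k-1}(\beta^{1/2}\Db^2u)$ and the integrations by parts; $\na_tu\in C^0(H^k)$ plus the equation give $\na_t\na_tu\in C^0(H^{k-1})$, so both summands of $E_k(u,\cdot)$ are genuinely differentiable). You notice this yourself in your absolute-continuity remark, which makes the approximation step unnecessary --- and it is also the weakest point of your write-up. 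The density you need is not provided by \lref{lem:Dicht} (which concerns $L^2\locsc(t(M),H^{k}(\Sigma_\bullet))$, not the class of $u$ with $\na_tu\in C^0(H^{k})$ and $Pu\in C^0(H^{k-1})$), and it cannot be \cref{cor:dense} either: that density rests on the well-posedness theorem, which rests on the present estimate, so invoking it here would be circular. You would therefore have to prove from scratch that your slice-wise mollifiers $J_\eps$ satisfy $Pu_\eps\to Pu$ in $L^2([t_0,t_1],H^{k-1}(\Sigma_\bullet))$, i.e.\ that $[P,J_\eps]u\to0$; since $P$ contains $\na_t\na_t$ and $J_\eps$ is an $s$-dependent family of smoothing operators, this needs Duhamel-type commutator estimates of roughly the same weight as the main computation, which you only assert (``one checks''). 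Either supply these, or better, drop the detour and differentiate $E_k(u,s)$ directly --- that is precisely why the theorem is stated for $\FE^{k+1}$ rather than $\FE^{k}$, with the optimal-regularity version deferred to \cref{cor:EnEstOptimal} once density is legitimately available.
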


\begin{rem}
The regularity assumptions on $u$ seem somewhat unnatural.
Considering the terms occuring in \eqref{eq:EnEst}, one would expect it to hold for $u\in \FE^{k}(M,t;S)$ instead of $\FE^{k+1}(M,t;S)$ and $Pu\in L^2\locsc(t(M),H^{k-1}(\Sigma_\bullet))$ instead of $C^0_{sc}(t(M),H^{k-1}(\Sigma_\bullet))$.
In other words, we expect it to hold for $u\in \FE^k(M,P,t)$.

Indeed, this is true as we will see in \cref{cor:EnEstOptimal}.
As far as the regularity assumption on $u$ is concerned, \tref{thm:enest} is a preliminary version of the energy estimate.
\end{rem}

\begin{proof}[Proof of \tref{thm:enest}]
With the doubling procedure described in Subsection~\ref{subsec:Sobolev}, we can reduce to the case that $M$ is spatially compact which we now assume.
We write the wave operator as follows:
\begin{align*}
P
&=
\beta^{-1} \na_t\na_t - \sum_{j=1}^n \na_{e_j}\na_{e_j} + \mbox{ lower order terms} \\
&=
\beta^{-1} \na_t\na_t + \Db^2 + \Zb\na_t + \Fb 
\end{align*}
where $\Fb$ and $\Zb$ are differential operators differentiating in $\Sigma_t$-direction only, of order at most $1$ and $0$, respectively.

Since $u\in\FE^{k+1}(M,t;S)$ we have $u\in C^1_{sc}(t(M),H^{k}(\Sigma_\bullet))$ and hence $s\mapsto \|u\|^2_{H^{k}(\Sigma_s)}$ is differentiable.
From $Pu\in C^0_{sc}(t(M),H^{k-1}(\Sigma_\bullet))$ and 
\begin{equation}
\na_t\na_t u = \beta\left\{Pu - \Db^2u - \Zb\na_tu - \Fb u \right\}
\label{eq:Solu}
\end{equation}
we get $\na_t\na_t u \in C^0_{sc}(t(M),H^{k-1}(\Sigma_\bullet))$.
Together with $\na_t u\in C^0_{sc}(t(M),H^{k}(\Sigma_\bullet))$ this gives $\na_t u\in C^1_{sc}(t(M),H^{k-1}(\Sigma_\bullet))$.
Hence $s\mapsto \|\beta^{-1/2}\na_tu\|^2_{H^{k-1}(\Sigma_s)}$ is differentiable.
We differentiate
\begin{align}
\ddt \|u\|^2_{H^{k}(\Sigma_t)}
&=
2\Re (\na_t\Db^ku,\Db^ku)_{L^2(\Sigma_t)} \notag\\
&=
2\Re (\Db^k\na_tu + [\na_t,\Db^k]u,\Db^ku)_{L^2(\Sigma_t)} \notag\\
&\le
2\Re (\Db^k\na_tu,\Db^ku)_{L^2(\Sigma_t)} + C_1 \|u\|^2_{H^k(\Sigma_t)} \, .
\label{eq:EE1}
\end{align}
Here we used that the commutator $[\na_t,\Db^k]$ is a smooth family of pseudo-differential operators along the $\Sigma_s$ of order $\le k$.
Next we differentiate
\begin{align}
\ddt &\|\beta^{-1/2}\na_t u\|^2_{H^{k-1}(\Sigma_s)} \\
&=
2\Re (\na_t\Db^{k-1}(\beta^{-1/2}\na_tu),\Db^{k-1}(\beta^{-1/2}\na_tu))_{L^2(\Sigma_t)} \notag\\
&=
2\Re (\Db^{k-1}\na_t(\beta^{-1/2}\na_tu),\Db^{k-1}(\beta^{-1/2}\na_tu))_{L^2(\Sigma_t)} \notag\\
&\quad
+2\Re([\na_t,\Db^{k-1}](\beta^{-1/2}\na_tu),\Db^{k-1}(\beta^{-1/2}\na_tu))_{L^2(\Sigma_t)} \notag\\
&\le
2\Re (\Db^{k-1}\na_t(\beta^{-1/2}\na_tu),\Db^{k-1}(\beta^{-1/2}\na_tu))_{L^2(\Sigma_t)} 
+ C_2\|\beta^{-1/2}\na_tu\|^2_{H^{k-1}(\Sigma_t)} \notag\\
&=
2\Re (\Db^{k-1}(\partial_t\beta^{-1/2}+\beta^{-1/2}\na_t)\na_tu,\Db^{k-1}(\beta^{-1/2}\na_tu))_{L^2(\Sigma_t)}  \notag\\
&\quad + C_2\|\beta^{-1/2}\na_tu\|^2_{H^{k-1}(\Sigma_t)}\notag\\
&\le
2\Re (\Db^{k-1}(\beta^{-1/2}\na_t\na_tu),\Db^{k-1}(\beta^{-1/2}\na_tu))_{L^2(\Sigma_t)} 
+ C_3\|\beta^{-1/2}\na_tu\|^2_{H^{k-1}(\Sigma_t)} \, .
\label{eq:EE2}
\end{align}
Using \eqref{eq:Solu} we get for the first summand in \eqref{eq:EE2}:
\begin{align}
\Re &(\Db^{k-1}(\beta^{-1/2}\na_t\na_tu),\Db^{k-1}(\beta^{-1/2}\na_tu))_{L^2(\Sigma_t)} \notag\\
&=
\Re (\Db^{k-1}(\beta^{1/2}\{Pu - \Db^2u - \Zb\na_tu - \Fb u\}),\Db^{k-1}(\beta^{-1/2}\na_tu))_{L^2(\Sigma_t)} \notag\\
&\le
-\Re(\Db^{k-1}(\beta^{1/2}\Db^2u),\Db^{k-1}(\beta^{-1/2}\na_tu))_{L^2(\Sigma_t)}  \notag\\
&\quad
+ C_4\cdot[\|Pu\|_{H^{k-1}(\Sigma_t)}+\|\beta^{-1/2}\na_tu\|_{H^{k-1}(\Sigma_t)} + \|u\|_{H^{k}(\Sigma_t)}]\cdot\|\beta^{-1/2}\na_tu\|_{H^{k-1}(\Sigma_t)}
\notag\\
&\le
-\Re(\Db(\beta^{1/2}\Db^{k}u),\Db^{k-1}(\beta^{-1/2}\na_tu))_{L^2(\Sigma_t)}  \notag\\
&\quad
+ C_5\cdot[\|Pu\|_{H^{k-1}(\Sigma_t)}+\|\beta^{-1/2}\na_tu\|_{H^{k-1}(\Sigma_t)} + \|u\|_{H^{k}(\Sigma_t)}]\cdot\|\beta^{-1/2}\na_tu\|_{H^{k-1}(\Sigma_t)}
\notag\\
&=
-\Re(\Db^{k}u,\beta^{1/2}\Db^{k}(\beta^{-1/2}\na_tu))_{L^2(\Sigma_t)}  \notag\\
&\quad
+ C_5\cdot[\|Pu\|_{H^{k-1}(\Sigma_t)}+\|\beta^{-1/2}\na_tu\|_{H^{k-1}(\Sigma_t)} + \|u\|_{H^{k}(\Sigma_t)}]\cdot\|\beta^{-1/2}\na_tu\|_{H^{k-1}(\Sigma_t)}
\notag\\
&\le
-\Re(\Db^{k}u,\Db^{k}\na_tu)_{L^2(\Sigma_t)}  \notag\\
&\quad
+ C_6\cdot[\|Pu\|_{H^{k-1}(\Sigma_t)}+\|\beta^{-1/2}\na_tu\|_{H^{k-1}(\Sigma_t)} + \|u\|_{H^{k}(\Sigma_t)}]\cdot\|\beta^{-1/2}\na_tu\|_{H^{k-1}(\Sigma_t)} \, . \notag
\end{align}
Inserting this into \eqref{eq:EE2} yields
\begin{equation}
\ddt \|\beta^{-1/2}\na_t u\|^2_{H^{k-1}(\Sigma_s)}
\le
-2\Re(\Db^{k}u,\Db^{k}\na_tu)_{L^2(\Sigma_t)} + C_7 \cdot E_k(u,t) + \|Pu\|^2_{H^{k-1}(\Sigma_t)}.
\label{eq:EE3}
\end{equation}
We add \eqref{eq:EE1} and \eqref{eq:EE3} and observe that the term ``with too many derivatives'', $2\Re(\Db^{k}u,\Db^{k}\na_tu)_{L^2(\Sigma_t)}$, cancels.
We get
\begin{equation}
\ddt E_k(u,t) \le C_8\cdot E_k(u,t) + \|Pu\|^2_{H^{k-1}(\Sigma_t)}.
\end{equation}
Gr\"onwall's lemma implies
\begin{align*}
E_k(u,t_1)
&\leq
E_k(u,t_0) \cdot e^{C_8(t_1-t_0)}  
+ \int_{t_0}^{t_1} e^{C_8(t_1-s)} \|Pu\|^2_{H^{k-1}(\Sigma_s)}\, ds . \qedhere
\end{align*}
\end{proof}

For $[t_0,t_1]\subset t(M)$ we write $\Sigma[t_0,t_1] := t^{-1}([t_0,t_1])$.

\begin{cor}\label{cor:sliceest}
Let $[T_0,T_1]\subset t(M)$, let $\tau\in t(M)$, and let $K\subset \Sigma_\tau$ be compact.
Let $k\in\R$.
Then there exists a constant $C>0$ such that 
\[
E_k(u,\Sigma_s)
\leq
C\cdot \left(E_k(u,\Sigma_\tau) + \|Pu\|^2_{[T_0,T_1],J(K),k-1} \right)
\]
holds for all $s\in[T_0,T_1]$ and for all $u\in \FE^{k+1}(M,P,t)$ with $Pu\in C^0_{sc}(t(M),H^{k-1}(\Sigma_\bullet))$ and $\supp(u)\subset J(K)$.
\end{cor}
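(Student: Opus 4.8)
The plan is to deduce \cref{cor:sliceest} from the energy estimate \tref{thm:enest} by controlling, over the fixed compact interval $[T_0,T_1]$, the two time-dependent factors on the right-hand side of \eqref{eq:EnEst}, and by supplying the backward-in-time estimate that \tref{thm:enest} does not provide directly. First I check that \tref{thm:enest} applies to our $u$: since $\FE^{k+1}(M,P,t)\subset\FE^{k+1}(M,t;S)$ and we assume $Pu\in C^0_{sc}(t(M),H^{k-1}(\Sigma_\bullet))$ and $\supp(u)\subset J(K)$ with $K\subset\Sigma_\tau$ compact in $M$, the estimate \eqref{eq:EnEst} is available. We may assume $\tau\in[T_0,T_1]$ (this is the case of interest, and it is what keeps the integration ranges below inside $[T_0,T_1]$, so that only the seminorm $\|Pu\|^2_{[T_0,T_1],J(K),k-1}$ appears).

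For the forward case $s\ge\tau$, I apply \tref{thm:enest} with $t_0=\tau$ and $t_1=s$. Because $0\le s-\tau\le T_1-T_0$ and the integration variable ranges over $[\tau,s]\subset[T_0,T_1]$, I bound both $e^{C(s-\tau)}$ and the weight $e^{C(s-\sigma)}$ by $e^{C(T_1-T_0)}$, and I enlarge $\int_\tau^s\|Pu\|^2_{H^{k-1}(\Sigma_\sigma)}\,d\sigma$ to the full seminorm $\|Pu\|^2_{[T_0,T_1],J(K),k-1}$. This gives the asserted inequality with constant $e^{C(T_1-T_0)}$.

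The backward case $s\le\tau$ is the only genuine point, since the Gr\"onwall argument behind \tref{thm:enest} runs forward in time. Here the plan is to apply the same theorem to $\hat t:=-t$, which becomes a Cauchy temporal function once the time orientation of $M$ is reversed. Under this reversal the splitting \eqref{eq:gbeta} is unchanged (so $\beta$ is the same), the level sets satisfy $\hat\Sigma_{\hat s}=\Sigma_{-\hat s}$, and the wave operator $P$ is unaffected, as it is defined without reference to time orientation. Crucially, the future-directed unit normal only flips sign, so the energy $E_k(u,\Sigma)$, in which $\na_\nu u$ enters quadratically, is unchanged; likewise $\FE^{k+1}(M,\hat t;S)=\FE^{k+1}(M,t;S)$ since $\na_{\hat t}=-\na_t$ and the spatial Sobolev norms are taken on the same hypersurfaces. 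Applying \eqref{eq:EnEst} for $\hat t$ with $\hat t_0=-\tau<\hat t_1=-s$ and substituting $\sigma\mapsto-\rho$ in the integral converts the forward estimate for $\hat t$ into
\[
E_k(u,\Sigma_s)\le E_k(u,\Sigma_\tau)\,e^{\hat C(\tau-s)}+\int_s^\tau e^{\hat C(\rho-s)}\,\|Pu\|^2_{H^{k-1}(\Sigma_\rho)}\,d\rho,
\]
and the same bounding of exponentials and integrals as before yields the claim with constant $e^{\hat C(T_1-T_0)}$.

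Taking $C$ to be the larger of the two constants then handles all $s\in[T_0,T_1]$ at once. I expect the only (modest) obstacle to be the backward estimate: one must verify that the wave-operator structure, the function $\beta$, the finite-energy space, and especially the energy functional $E_k$ are all invariant under reversing the time orientation. Once that invariance is in place, the corollary reduces to two applications of \tref{thm:enest} together with the elementary observation that the exponential weights and the time-integrals are uniformly controlled on the fixed compact interval $[T_0,T_1]$.
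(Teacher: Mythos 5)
Your proof is correct and follows exactly the paper's own argument: apply \tref{thm:enest} forward in time for $s\ge\tau$, and handle $s\le\tau$ by reversing the time orientation, with the constants and integrals uniformly controlled on the compact interval $[T_0,T_1]$. The paper's proof is just three lines ("W.l.o.g.\ $\tau\in[T_0,T_1]$; for $s\in[\tau,T_1]$ apply \tref{thm:enest} directly; for $s\in[T_0,\tau]$ apply it after reversing the time orientation"), so your verification that $\beta$, the level sets, the wave-operator structure, the spaces $\FE^{k+1}$ and the energy $E_k$ are all invariant under time reversal is precisely the detail the paper leaves implicit.
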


\begin{proof}
W.l.o.g.\ we can assume that $\tau\in[T_0,T_1]$, otherwise we simply increase the interval $[T_0,T_1]$.
For $s\in [\tau,T_1]$ the assertion follows directly from Theorem~\ref{thm:enest}.
For $s\in [T_0,\tau]$, the assertion follows from Theorem~\ref{thm:enest} after reversing the time orientation.
\end{proof}

\begin{cor}[Uniqueness for the Cauchy problem]\label{cor:CauchyUnique}
A section $u\in\FE^k(M,P,t)$ is uniquely determined by $Pu$ and the restrictions of $u$ and of $\na_\nu u$ to any $\Sigma_\tau$.
\end{cor}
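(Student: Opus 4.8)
The plan is to reduce the statement to a homogeneous uniqueness assertion and then to run the energy estimate \emph{one Sobolev level below} $k$. Suppose $u_1,u_2\in\FE^k(M,P,t)$ satisfy $Pu_1=Pu_2$ and agree together with their normal derivatives along $\Sigma_\tau$, i.e.\ $u_1|_{\Sigma_\tau}=u_2|_{\Sigma_\tau}$ and $\na_\nu u_1|_{\Sigma_\tau}=\na_\nu u_2|_{\Sigma_\tau}$. Since $\FE^k(M,P,t)$ is a vector space and $P$ is linear, the difference $v:=u_1-u_2$ again lies in $\FE^k(M,P,t)$ and satisfies $Pv=0$ as well as $v|_{\Sigma_\tau}=0$ and $\na_\nu v|_{\Sigma_\tau}=0$. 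It therefore suffices to prove $v=0$.

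The essential point is a shift of the regularity index. The energy estimate \tref{thm:enest} requires its argument to lie in $\FE^{k+1}(M,t;S)$, one derivative more than its energy index $k$; our $v$ only lies in $\FE^k(M,t;S)$, so the estimate cannot be applied at index $k$. I would instead apply \tref{thm:enest} with $k$ replaced by $k-1$. For this one needs $v\in\FE^{(k-1)+1}(M,t;S)=\FE^k(M,t;S)$, which holds, and $Pv\in C^0_{sc}(t(M),H^{(k-1)-1}(\Sigma_\bullet))=C^0_{sc}(t(M),H^{k-2}(\Sigma_\bullet))$, which also holds since $Pv=0$. Because $v$ is spatially compact there is a compact $K\subset M$ with $\supp(v)\subset J(K)$, so all hypotheses of \tref{thm:enest} at index $k-1$ are met. (It is cleaner to invoke \tref{thm:enest} directly rather than \cref{cor:sliceest} here, since the latter demands $K\subset\Sigma_\tau$, whereas the theorem allows an arbitrary compact $K\subset M$.)

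To conclude, observe that the initial energy at index $k-1$ vanishes: $E_{k-1}(v,\tau)=\|v|_{\Sigma_\tau}\|^2_{H^{k-1}(\Sigma_\tau)}+\|\beta^{-1/2}\na_t v\|^2_{H^{k-2}(\Sigma_\tau)}=0$, because $v|_{\Sigma_\tau}$ and $\na_\nu v|_{\Sigma_\tau}$ already vanish in the stronger norms $H^k(\Sigma_\tau)$ and $H^{k-1}(\Sigma_\tau)$, hence a fortiori in $H^{k-1}(\Sigma_\tau)$ and $H^{k-2}(\Sigma_\tau)$. Plugging $Pv=0$ and $t_0=\tau$ into \eqref{eq:EnEst} at index $k-1$ yields $E_{k-1}(v,t_1)\le E_{k-1}(v,\tau)\,e^{C(t_1-\tau)}=0$ for every $t_1>\tau$ in any interval $[T_0,T_1]\subset t(M)$; reversing the time orientation (as in the proof of \cref{cor:sliceest}) handles $t_1<\tau$. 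Thus $E_{k-1}(v,s)=0$, and in particular $v|_{\Sigma_s}=0$ in $H^{k-1}(\Sigma_s)$, for every $s\in t(M)$. Through the embedding \eqref{eq:SecDist} of $\FE^k(M,t;S)$ into $\D'(M;S)$ this forces $v=0$, proving uniqueness.

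I expect the regularity mismatch to be the only genuine obstacle: the energy estimate is stated at one order of differentiability above its energy index, so it cannot be fed a generic $\FE^k$-section at index $k$. The resolution is precisely the index shift $k\mapsto k-1$, which matches $\FE^k(M,t;S)=\FE^{(k-1)+1}(M,t;S)$ exactly; the remaining ingredients (linear reduction, spatial compactness of the support, and Gr\"onwall through \eqref{eq:EnEst}) are routine.
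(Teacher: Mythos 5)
Your proof is correct and is essentially the paper's own argument: the paper's one-line proof applies \cref{cor:sliceest} at index $k-1$ (which is exactly \tref{thm:enest} run forwards and, after reversing the time orientation, backwards) to the difference of two solutions, concluding $E_{k-1}(u,\Sigma_s)=0$ for all $s$ and hence $u=0$. Your explicit index shift $k\mapsto k-1$ and the time-reversal step are precisely what that corollary packages, so the two arguments coincide.
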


\begin{proof}
If $u$ and $\na_\nu u$ vanish along $\Sigma_\tau$ and $Pu=0$, then by \cref{cor:sliceest} $E_{k-1}(u,\Sigma_s)=0$ for all $s\in t(M)$, hence $u=0$.
\end{proof}

\begin{cor}\label{cor:Hkest}
Let $[T_0,T_1]\subset t(M)$ and let $K\subset M$ be compact.
Let $k\in\N$.
Then there exists a constant $C>0$ such that 
\[
\|u\|^2_{H^k(\Sigma[t_0,t_1])}
\leq
C\cdot\left(\|u|_{\Sigma_{t_0}}\|^2_{H^k(\Sigma_{t_0})}
+ \|\na_tu\|^2_{H^{k-1}(\Sigma_{t_0})}
+ \|Pu\|^2_{H^{k-1}(\Sigma[t_0,t_1])}\right)
\]
holds for all $t_0,t_1\in[T_0,T_1]$ with $t_0<t_1$ and for all $u\in C^\infty_{J(K)}(M)$.
\end{cor}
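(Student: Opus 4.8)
The plan is to convert the slicewise energy estimate of \tref{thm:enest} into a genuine spacetime Sobolev estimate on the slab $\Sigma[t_0,t_1]$. As in the proof of that theorem, I would first use the doubling trick of Subsection~\ref{subsec:Sobolev} to reduce to the case that $M$ is spatially compact. The starting point is the elementary observation that, because of the metric splitting \eqref{eq:gbeta} and the resulting $dV=\beta^{1/2}\,dA\,ds$, a spacetime covariant derivative of total order $\le k$ decomposes into $i$ derivatives in the $\partial_t$-direction and a tangential part, so that on the compact slab the norm $\|u\|^2_{H^k(\Sigma[t_0,t_1])}$ is equivalent to
\[
\sum_{i=0}^{k} \int_{t_0}^{t_1} \|\na_t^i u\|^2_{H^{k-i}(\Sigma_s)}\, ds ,
\]
with constants depending only on $[T_0,T_1]$ and on $\beta$ and its derivatives on the relevant compact set. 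It thus suffices to bound each summand. Note that $u\in C^\infty_{J(K)}(M)$ lies in $\FE^{k+1}(M,t;S)$ with $Pu\in C^0_{sc}(t(M),H^{k-1}(\Sigma_\bullet))$, so \tref{thm:enest} is applicable.

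For $i=0$ and $i=1$ the summand is controlled directly by the energy. Indeed $\|u\|^2_{H^k(\Sigma_s)}+\|\na_t u\|^2_{H^{k-1}(\Sigma_s)}\le C\,E_k(u,s)$ (using that $\beta^{\pm1/2}$ is bounded on the compact region), and \tref{thm:enest}, applied on $[t_0,s]$ with the exponential factors estimated by $e^{C(T_1-T_0)}$, gives
\[
E_k(u,s)\le C\Big(E_k(u,t_0)+\int_{t_0}^{t_1}\|Pu\|^2_{H^{k-1}(\Sigma_\sigma)}\,d\sigma\Big)
\]
uniformly in $s\in[t_0,t_1]$. Integrating in $s$ disposes of the two lowest summands. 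The integral $\int_{t_0}^{t_1}\|Pu\|^2_{H^{k-1}(\Sigma_\sigma)}\,d\sigma$ involves only spatial derivatives of $Pu$ and is therefore dominated by the full spacetime norm $\|Pu\|^2_{H^{k-1}(\Sigma[t_0,t_1])}$, while $E_k(u,t_0)=\|u|_{\Sigma_{t_0}}\|^2_{H^k(\Sigma_{t_0})}+\|\na_\nu u\|^2_{H^{k-1}(\Sigma_{t_0})}$ matches, up to the bounded factor $\beta^{-1/2}$ relating $\na_\nu u$ and $\na_t u$, the first two terms on the right-hand side of the corollary.

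For $i\ge 2$ I would trade time derivatives for $Pu$ and spatial derivatives using the second-order equation \eqref{eq:Solu}, namely $\na_t\na_t u=\beta\{Pu-\Db^2u-\Zb\na_tu-\Fb u\}$. Differentiating this $(i-2)$ times in $t$, commuting the time derivatives past the tangential operators $\Db^2,\Zb,\Fb$ (the commutators being again tangential operators of bounded order, but acting on strictly fewer time derivatives of $u$), and estimating in $H^{k-i}(\Sigma_s)$, one obtains by induction on $i$
\[
\|\na_t^i u\|_{H^{k-i}(\Sigma_s)}\le C\Big(E_k(u,s)^{1/2}+\sum_{j=0}^{i-2}\|\na_t^j(Pu)\|_{H^{k-2-j}(\Sigma_s)}\Big).
\]
Since $\Db^2$ raises the spatial order by $2$, $\Fb$ by $1$, and $\Zb$ by $0$, the leading terms reduce precisely to the $(i-2)$- and $(i-1)$-cases, while each commutator term carries fewer time derivatives with enough spatial regularity to be absorbed by a strictly lower case; hence the induction closes. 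The $Pu$-terms have total spacetime order $j+(k-2-j)=k-2\le k-1$, so after squaring and integrating in $s$ each $\int_{t_0}^{t_1}\|\na_t^j(Pu)\|^2_{H^{k-2-j}(\Sigma_s)}\,ds$ is dominated by $\|Pu\|^2_{H^{k-1}(\Sigma[t_0,t_1])}$; combined with the bound on $\int E_k(u,s)\,ds$ from the previous step, this controls the remaining summands.

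Assembling the three steps and invoking the norm equivalence yields the claimed inequality. The main obstacle is the bookkeeping in the inductive step: after differentiating \eqref{eq:Solu} in time and commuting, one must verify that every resulting term either falls under a strictly lower value of $i$ at admissible spatial regularity, or is a time derivative of $Pu$ of total order at most $k-1$. The orders of $\Db^2,\Zb,\Fb$ and the fact that each commutation consumes one time derivative are exactly what make this accounting succeed; everything else is routine, which is why the result is restricted to integer $k\in\N$, where such a finite induction on the number of time derivatives is available.
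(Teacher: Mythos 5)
Your proof is correct and follows essentially the same route as the paper: the paper also decomposes the slab norm into mixed time/space derivatives, writes $P=\beta^{-1}\na_t\na_t+Q$ (equivalently your \eqref{eq:Solu}) to trade each pair of time derivatives for $Pu$ plus tangential terms by induction on the number of time derivatives, and then bounds $\int_{t_0}^{t_1}E_k(u,s)\,ds$ via \tref{thm:enest}. The only cosmetic difference is that you run the induction slicewise in $H^{k-i}(\Sigma_s)$ and integrate at the end, whereas the paper inducts directly on the spacetime norms $\|(\na_t)^\ell u\|_{H^{k-\ell}(\Sigma[t_0,t_1])}$.
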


\begin{proof}
Recall that $P$ is of the form
\[
P = \beta^{-1}\na_t\na_t + Q
\]
where $Q$ is a second-order operator containing $t$-derivatives up to order $1$ only.
Therefore
\begin{align*}
\|(\na_t)^\ell u\|_{H^{k-\ell}(\Sigma[t_0,t_1])}
&=
\|(\na_t)^{\ell-2}(\beta(P-Q)) u\|_{H^{k-\ell}(\Sigma[t_0,t_1])} \\
&\le
C_1\cdot\left(\|Pu\|_{H^{k-2}(\Sigma[t_0,t_1])} + \|(\na_t)^{\ell-2}(\beta Q u)\|_{H^{k-\ell}(\Sigma[t_0,t_1])}\right)
\end{align*}
where $(\na_t)^{\ell-2}(\beta Q u)$ contains at most $\ell-1$ $t$-derivatives of $u$.
The $k$-energy controls $t$-derivatives up to order $1$.
We get inductively
\begin{align}
\|u\|^2_{H^k(\Sigma[t_0,t_1])}
&\le
C_2\cdot\left( \int_{t_0}^{t_1} E_k(u,s)\,ds + \sum_{\ell=2}^k\|(\na_t)^\ell u\|^2_{H^{k-\ell}(\Sigma[t_0,t_1])} \right) \notag\\
&\le
C_3\cdot\left( \int_{t_0}^{t_1} E_k(u,s)\,ds + \sum_{\ell=2}^{k-1}\|(\na_t)^\ell u\|^2_{H^{k-\ell}(\Sigma[t_0,t_1])}  + \|Pu\|_{H^{k-2}(\Sigma[t_0,t_1])}^2 \right) \notag\\
&\le
\quad\quad\cdots \notag\\
&\le
C_4 \cdot\left( \int_{t_0}^{t_1} E_k(u,s)\,ds  + \|Pu\|_{H^{k-2}(\Sigma[t_0,t_1])}^2 \right) .
\label{eq:Hk1}
\end{align}
\tref{thm:enest} yields
\begin{align}
\int_{t_0}^{t_1} E_k(u,s)\,ds 
&\le
\int_{t_0}^{t_1} \left(E_k(u,t_0) \cdot e^{C_5(s-t_0)}  + \int_{t_0}^{s} e^{C_5(s-t)} \|Pu\|_{H^{k-1}(\Sigma_t)}^2 \, dt\right)ds \notag\\
&\le
(t_1-t_0) \cdot e^{C_5(t_1-t_0)} \cdot \left(E_k(u,t_0) + \int_{t_0}^{t_1} \|Pu\|_{H^{k-1}(\Sigma_t)}^2\, dt\right) \notag\\
&\le
C_6\cdot\left(\|u|_{\Sigma_{t_0}}\|^2_{H^k(\Sigma_{t_0})}
+ \|\na_tu\|^2_{H^{k-1}(\Sigma_{t_0})}
+ \|Pu\|^2_{H^{k-1}(\Sigma[t_0,t_1])}\right) .
\label{eq:Hk2}
\end{align}
Inserting \eqref{eq:Hk2} into \eqref{eq:Hk1} proves the corollary.
\end{proof}

\section{The Cauchy problem}

The following theorem states that the Cauchy problem for $P$ is well posed in spaces of finite $k$-energy sections.

\begin{thm}\label{thm:Cauchy1}
Fix $\tau\in t(M)$ and $k\in\R$.
The map $u\mapsto (u|_{\Sigma_\tau},\na_\nu u|_{\Sigma_\tau},Pu)$ yields an isomorphism
\begin{equation}
\FE^k(M,P,t)
\to
H^k_c(\Sigma_\tau) \oplus H^{k-1}_c(\Sigma_\tau) \oplus L^2\locsc(t(M),H^{k-1}(\Sigma_\bullet))
\label{eq:Cauchy1}
\end{equation}
of topological vector spaces.
\end{thm}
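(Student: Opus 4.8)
The plan is to show that the linear map of \eqref{eq:Cauchy1}, which I denote by $\Phi$, is a continuous bijection whose inverse is continuous; since all spaces involved are topological vector spaces, this makes $\Phi$ an isomorphism. That $\Phi$ is well defined and continuous is immediate from the definitions of the topologies: for $u\in\FE^k(M,P,t)$ the inclusion $u\in C^0_{sc}(t(M),H^k(\Sigma_\bullet))$ gives $u|_{\Sigma_\tau}\in H^k_c(\Sigma_\tau)$, the inclusion $u\in C^1_{sc}(t(M),H^{k-1}(\Sigma_\bullet))$ together with $\na_\nu u=\beta^{-1/2}\na_t u$ gives $\na_\nu u|_{\Sigma_\tau}\in H^{k-1}_c(\Sigma_\tau)$, and $Pu\in L^2\locsc(t(M),H^{k-1}(\Sigma_\bullet))$ holds by the very definition of $\FE^k(M,P,t)$; evaluation at $s=\tau$ and the operator $P$ are continuous for the respective topologies. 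Injectivity of $\Phi$ is precisely \cref{cor:CauchyUnique}.

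The heart of the matter is surjectivity, i.e.\ existence of solutions. I would first dispose of smooth data: given $u_0,u_1\in C^\infty_c(\Sigma_\tau)$ and $f\in C^\infty_{sc}(M;S)$, the classical solvability of the Cauchy problem for smooth sections \cite{BGP07} provides a smooth solution $u$ of $Pu=f$ with $u|_{\Sigma_\tau}=u_0$ and $\na_\nu u|_{\Sigma_\tau}=u_1$. Finite propagation speed---a consequence of the causal structure, equivalently of the local uniqueness behind \cref{cor:CauchyUnique}---confines $\supp(u)$ to a single set $J(K)$ with $K\subset M$ compact, determined by $\supp(u_0)\cup\supp(u_1)$ and the spatially compact $\supp(f)$; hence $u$ is spatially compact and $u\in\FE^k(M,P,t)$ for every $k$. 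The energy estimate turns this assignment into a bounded operator: by \cref{cor:sliceest}, applied to the smooth $u$ (whose hypotheses are then trivially met), one has $\sup_{s\in I}E_k(u,\Sigma_s)\le C\,(E_k(u,\Sigma_\tau)+\|Pu\|^2_{I,J(K),k-1})$ for every compact interval $I$, and since $E_k(u,\Sigma_s)$ dominates both the $C^0_{sc}(t(M),H^k(\Sigma_\bullet))$- and the $C^1_{sc}(t(M),H^{k-1}(\Sigma_\bullet))$-seminorm at $\Sigma_s$, this bounds every defining seminorm of $u$ by $\|u_0\|^2_{H^k(\Sigma_\tau)}+\|u_1\|^2_{H^{k-1}(\Sigma_\tau)}+\int_I\|f\|^2_{H^{k-1}(\Sigma_s)}\,ds$.

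For arbitrary data I would then use density and completeness. Let $(u_0,u_1,f)$ lie in the target of \eqref{eq:Cauchy1}. All three components have (spatially) compact support, so there are a compact $K_0\subset\Sigma_\tau$ and a spatially compact $K_1\subset M$ containing them; using that $H^k_c(\Sigma_\tau)$ and $H^{k-1}_c(\Sigma_\tau)$ are built from completions of smooth compactly supported sections, and \lref{lem:Dicht} for the $f$-component, I approximate $(u_0,u_1,f)$ by smooth data with supports in fixed, slightly enlarged sets. By finite propagation speed the associated smooth solutions all have support in one fixed $J(K)$, hence lie in a single Fréchet step of the strict inductive limit defining $\FE^k(M,P,t)$. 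Applying the bound of the previous paragraph to differences shows they form a Cauchy sequence there; the limit $u$ exists by completeness of that step and, since $Pu=f$ by the convergence $Pu_n=f_n\to f$ in $\D'(M;S)$, lies in $\FE^k(M,P,t)$. Continuity of $\Phi$ then yields $\Phi(u)=(u_0,u_1,f)$, establishing surjectivity.

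Continuity of $\Phi^{-1}$ follows because the a priori bound survives the limit: the unique solution $u=\Phi^{-1}(u_0,u_1,f)$ satisfies the same seminorm estimate in terms of the data, which is exactly continuity of $\Phi^{-1}$ into $\FE^k(M,t;S)$; as the $Pu$-component of $\Phi^{-1}$ is just the projection $(u_0,u_1,f)\mapsto f$, continuity into $L^2\locsc(t(M),H^{k-1}(\Sigma_\bullet))$ is automatic, so $\Phi^{-1}$ is continuous for the finer topology of $\FE^k(M,P,t)$. The step I expect to require the most care is the support bookkeeping: one must guarantee that the approximating solutions remain inside a single step of each strict inductive limit---this is exactly where finite propagation speed enters---and one must check that \cref{cor:sliceest}, whose stated hypotheses demand the stronger regularity $u\in\FE^{k+1}(M,P,t)$ and $Pu\in C^0_{sc}(t(M),H^{k-1}(\Sigma_\bullet))$, is legitimately applied to the smooth approximants and then transferred to the limit in the weaker $\FE^k$-seminorms.
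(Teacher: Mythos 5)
Your proposal is correct and follows essentially the same route as the paper's proof: continuity of the forward map from the definitions, smooth solvability from \cite{BGP07} plus finite propagation speed, the a priori bounds from \cref{cor:sliceest}, density of smooth data (\lref{lem:Dicht}), and \cref{cor:CauchyUnique} to close the argument. The only cosmetic difference is that you run the extension argument by hand (Cauchy sequences of smooth solutions in a fixed Fr\'echet step of the inductive limit), whereas the paper packages the same content as the unique continuous extension of the solution map $\Solve$ from the dense subspace of smooth data.
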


\begin{proof}
For any spatially compact subset $K\subset M$ we have the continuous linear maps
\[
C^0_K(t(M),H^k(\Sigma_\bullet)) 
\xrightarrow{u\mapsto u|_{\Sigma_\tau}}
H^k_{K\cap\Sigma_\tau}(\Sigma_\tau)
\hookrightarrow
H^k_c(\Sigma_\tau) .
\]
Hence
\[
\FE^k(M,t)
\hookrightarrow
C^0_{sc}(t(M),H^k(\Sigma_\bullet)) 
\rightarrow
H^k_c(\Sigma_\tau)
\]
is continuous and thus the restriction to $\FE^k(M,P,t)$ is continuous as well.

Similarly, 
\[
C^1_K(t(M),H^{k-1}(\Sigma_\bullet))
\xrightarrow{\na_\nu}
C^0_K(t(M),H^{k-1}(\Sigma_\bullet)) 
\xrightarrow{u\mapsto u|_{\Sigma_\tau}}
H^{k-1}_{K\cap\Sigma_\tau}(\Sigma_\tau)
\hookrightarrow
H^{k-1}_c(\Sigma_\tau) 
\]
is continuous which yields continuity of the linear maps
\[
\FE^k(M,P,t)
\hookrightarrow
C^1_{sc}(t(M),H^{k-1}(\Sigma_\bullet)) 
\rightarrow
H^{k-1}_c(\Sigma_\tau)  .
\]
Since $P:\FE^k(M,P,t)\to L^2\locsc(t(M),H^{k-1}(\Sigma_\bullet))$ is continuous as well, we have shown that the map in \eqref{eq:Cauchy1} is continuous.

Now we consider the inverse of the map in \eqref{eq:Cauchy1}.
Let $K\subset\Sigma_\tau$ be compact.
Given $(u_0,u_1,f) \in C^\infty_K(\Sigma_\tau) \oplus C^\infty_K(\Sigma_\tau) \oplus C^\infty_{J(K)}(M)$, there is a unique smooth solution $u$ of $Pu=f$ with $u|_{\Sigma_\tau}$ and $\na_\nu u|_{\Sigma_\tau}=u_1$, see \cite[Thm.~3.2.11]{BGP07}.
It satisfies $\supp(u) \subset J(K)$.

Let $I\subset t(M)$ be a compact subinterval.
By Corollary~\ref{cor:sliceest}, we have estimates
\[
\|u\|^2_{I,J(K),0,k} 
\le 
C\cdot \left( \|u_0\|^2_{H^k(\Sigma_\tau)} + \|u_1\|^2_{H^{k-1}(\Sigma_\tau)} + \|f\|^2_{I,J(K),k-1} \right)
\]
and
\[
\|u\|^2_{I,J(K),1,k-1} 
\le 
C\cdot \left( \|u_0\|^2_{H^k(\Sigma_\tau)} + \|u_1\|^2_{H^{k-1}(\Sigma_\tau)} + \|f\|^2_{I,J(K),k-1} \right)
\]
with $C$ depending on $K$ but independent of $u_0$, $u_1$, and $f$.
Thus the solution map $(u_0,u_1,f)\mapsto u$ extends uniquely to continuous linear maps
\begin{align*}
H^k_K(\Sigma_\tau) \oplus H^{k-1}_K(\Sigma_\tau) \oplus L^2\locJK(t(M),H^{k-1}(\Sigma_\bullet))
&\rightarrow
C^0_{J(K)}(t(M),H^k(\Sigma_\bullet)) \\
&\hookrightarrow
C^0_{sc}(t(M),H^k(\Sigma_\bullet))
\end{align*}
and
\begin{align*}
H^k_K(\Sigma_\tau) \oplus H^{k-1}_K(\Sigma_\tau) \oplus L^2\locJK(t(M),H^{k-1}(\Sigma_\bullet))
&\rightarrow
C^1_{J(K)}(t(M),H^{k-1}(\Sigma_\bullet)) \\
&\hookrightarrow
C^1_{sc}(t(M),H^{k-1}(\Sigma_\bullet))  \, .
\end{align*}
Thus we get continuous linear maps
\[
H^k_c(\Sigma_\tau) \oplus H^{k-1}_c(\Sigma_\tau) \oplus L^2\locsc(t(M),H^{k-1}(\Sigma_\bullet))
\rightarrow
C^0_{sc}(t(M),H^{k}(\Sigma_\bullet))  
\]
and
\[
H^k_c(\Sigma_\tau) \oplus H^{k-1}_c(\Sigma_\tau) \oplus L^2\locsc(t(M),H^{k-1}(\Sigma_\bullet))
\rightarrow
C^1_{sc}(t(M),H^{k-1}(\Sigma_\bullet))  
\]
and hence a continuous linear map
\[
\Solve : 
H^k_c(\Sigma_\tau) \oplus H^{k-1}_c(\Sigma_\tau) \oplus L^2\locsc(t(M),H^{k-1}(\Sigma_\bullet))
\rightarrow
\FE^k(M,t)  \,  .
\]
Since $P\circ\Solve(u_0,u_1,f) = f$, the composition $P\circ\Solve$ is continuous.
Thus the solution map is also continuous as a map
\[
\Solve : 
H^k_c(\Sigma_\tau) \oplus H^{k-1}_c(\Sigma_\tau) \oplus L^2\locsc(t(M),H^{k-1}(\Sigma_\bullet))
\rightarrow
\FE^k(M,P,t).
\]
The map \eqref{eq:Cauchy1} and $\Solve$ are inverse to each other.
Indeed, $\Solve$ followed by the map in \eqref{eq:Cauchy1} is the identity.
Conversely, if we start with a solution, apply \eqref{eq:Cauchy1} and solve again, we recover the original solution because of \cref{cor:CauchyUnique}.
\end{proof}

In particular, the Cauchy problem for the homogeneous equation $Pu=0$ is well posed:

\begin{cor}\label{cor:Cauchy2}
Fix $\tau\in t(M)$ and $k\in\R$.
The map $u\mapsto (u|_{\Sigma_\tau},\na_\nu u|_{\Sigma_\tau})$ yields an isomorphism
\begin{equation}
\FE^k(M,\ker(P),t)
\to
H^k_c(\Sigma_\tau) \oplus H^{k-1}_c(\Sigma_\tau)
\label{eq:Cauchy2}
\end{equation}
of topological vector spaces.\qed
\end{cor}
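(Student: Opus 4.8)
The plan is to deduce this directly from \tref{thm:Cauchy1} by restricting the isomorphism established there to the kernel of $P$. First I would observe that $\FE^k(M,\ker(P),t)$ is contained in $\FE^k(M,P,t)$: by definition an element $u$ of the former satisfies $Pu=0$, and since $0$ lies in $L^2\locsc(t(M),H^{k-1}(\Sigma_\bullet))$ we have $u\in P^{-1}(L^2\locsc(t(M),H^{k-1}(\Sigma_\bullet)))$, so indeed $\FE^k(M,\ker(P),t)=\FE^k(M,P,t)\cap\ker(P)$ sits inside $\FE^k(M,P,t)$.

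Next I would write $\Phi$ for the isomorphism $u\mapsto (u|_{\Sigma_\tau},\na_\nu u|_{\Sigma_\tau},Pu)$ furnished by \tref{thm:Cauchy1}. The third component of $\Phi(u)$ is exactly $Pu$, so $u\in\ker(P)$ if and only if this component vanishes. Consequently $\Phi$ maps $\FE^k(M,\ker(P),t)$ bijectively onto the closed linear subspace $H^k_c(\Sigma_\tau)\oplus H^{k-1}_c(\Sigma_\tau)\oplus\{0\}$ of the target. The evident projection onto the first two factors identifies this subspace, as a topological vector space, with $H^k_c(\Sigma_\tau)\oplus H^{k-1}_c(\Sigma_\tau)$, and under this identification the restriction of $\Phi$ is precisely the map $u\mapsto(u|_{\Sigma_\tau},\na_\nu u|_{\Sigma_\tau})$ appearing in the corollary.

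Since the restriction of a topological isomorphism to a linear subspace, equipped on both sides with the respective relative topologies, is again a topological isomorphism onto its image, the only point requiring care is that the relative topology which $\FE^k(M,P,t)$ induces on $\FE^k(M,\ker(P),t)$ is the one intended in the statement. This is exactly the content of the observation recorded earlier (following the definition of $\FE^k(M,P,t)$): although the topology of $\FE^k(M,P,t)$ is strictly finer than the relative topology inherited from $\FE^k(M,t;S)$, the two topologies coincide on $\FE^k(M,\ker(P),t)$. Thus I expect no genuine difficulty; the entire argument is a restriction of the already-established isomorphism, and the only step deserving attention is this topological bookkeeping. With it settled, the restricted map is the desired isomorphism of topological vector spaces.
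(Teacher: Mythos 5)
Your proof is correct and is exactly the argument the paper intends: the corollary is stated there with no written proof precisely because it follows by restricting the isomorphism of \tref{thm:Cauchy1} to the subspace where the third component $Pu$ vanishes. Your care about the topology is well placed and matches the paper's own remark that $\FE^k(M,t;S)$ and $\FE^k(M,P,t)$ induce the same relative topology on $\FE^k(M,\ker(P),t)$, so nothing is missing.
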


As a first consequence we see that smooth sections are dense in $\FE^k(M,P,t)$ and in $\FE^k(M,\ker(P),t)$.

\begin{cor}\label{cor:dense}
For any $k\in\R$, $C^\infty_{sc}(M)$ is dense in $\FE^k(M,P,t)$ and $C^\infty_{sc}(M) \cap \ker(P)$ is dense in $\FE^k(M,\ker(P),t)$.
\end{cor}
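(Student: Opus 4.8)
The plan is to transport both density statements through the isomorphisms established in \tref{thm:Cauchy1} and \cref{cor:Cauchy2}. Since these are isomorphisms of topological vector spaces, hence in particular homeomorphisms, a subset of the source space is dense if and only if its image in the target is dense. So in each case it suffices to exhibit a dense subspace of the target consisting of data whose preimage under the solution map is smooth with spatially compact support.

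First I would treat $\FE^k(M,P,t)$. The isomorphism of \tref{thm:Cauchy1} identifies this space with
\[
H^k_c(\Sigma_\tau) \oplus H^{k-1}_c(\Sigma_\tau) \oplus L^2\locsc(t(M),H^{k-1}(\Sigma_\bullet)),
\]
and its inverse $\Solve$ sends smooth compactly supported Cauchy data together with a smooth source to a \emph{smooth} solution, as recorded in the construction inside the proof of \tref{thm:Cauchy1} (with support controlled inside some $J(K)$). Thus it is enough to show that $C^\infty_c(\Sigma_\tau)\oplus C^\infty_c(\Sigma_\tau)\oplus C^\infty_{sc}(M)$ is dense in the target. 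For the two initial-data factors this is immediate: each $H^k_K(\Sigma_\tau)$ is by definition the completion of $C^\infty_K(\Sigma_\tau)$, so smooth sections are dense in $H^k_c(\Sigma_\tau)$ and in $H^{k-1}_c(\Sigma_\tau)$. For the source factor, density of $C^\infty_{sc}(M)$ in $L^2\locsc(t(M),H^{k-1}(\Sigma_\bullet))$ is exactly \lref{lem:Dicht}. Since the closure of a finite product is the product of the closures, the smooth data are dense; applying the homeomorphism $\Solve$, the resulting smooth solutions are dense in $\FE^k(M,P,t)$. As these solutions all lie in $C^\infty_{sc}(M)$, and $C^\infty_{sc}(M)\subset\FE^k(M,P,t)$, the space $C^\infty_{sc}(M)$ is dense.

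For the solution space $\FE^k(M,\ker(P),t)$ I would argue identically using \cref{cor:Cauchy2}. Its inverse identifies $H^k_c(\Sigma_\tau)\oplus H^{k-1}_c(\Sigma_\tau)$ with $\FE^k(M,\ker(P),t)$ and sends smooth compactly supported Cauchy data to smooth solutions of the homogeneous equation, i.e.\ to elements of $C^\infty_{sc}(M)\cap\ker(P)$. Density of $C^\infty_c(\Sigma_\tau)\oplus C^\infty_c(\Sigma_\tau)$ in $H^k_c(\Sigma_\tau)\oplus H^{k-1}_c(\Sigma_\tau)$ is again just density of smooth sections in each Sobolev factor, and transporting this through the homeomorphism yields density of $C^\infty_{sc}(M)\cap\ker(P)$ in $\FE^k(M,\ker(P),t)$.

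There is no serious obstacle here: the analytic content has been entirely offloaded into the isomorphism theorems and into \lref{lem:Dicht}. The only points requiring a little care are that $\Solve$ genuinely preserves smoothness, which one reads off directly from the construction in \tref{thm:Cauchy1} (smooth data produce classical smooth solutions with controlled spatially compact support), and that the approximating objects indeed belong to the sets whose density is asserted, which follows from the inclusion $C^\infty_{sc}(M)\subset\FE^k(M,P,t)$ and from $Pu=0$ being preserved along the homogeneous solution map.
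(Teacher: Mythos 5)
Your proof is correct and is essentially the paper's own argument: both transport density through the isomorphisms of \tref{thm:Cauchy1} and \cref{cor:Cauchy2}, using density of $C^\infty_c(\Sigma_\tau)$ in the Sobolev factors, \lref{lem:Dicht} for the source term, and smoothness of solutions with smooth data (the paper cites \cite[Thm.~3.2.11]{BGP07} for this last point, which is the fact you read off from the construction of $\Solve$).
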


\begin{proof}
This follows from \tref{thm:Cauchy1} and \cref{cor:Cauchy2} because $C^\infty_c(\Sigma_\tau)$ is dense in $H^k_c(\Sigma_\tau)$ and in $H^{k-1}_c(\Sigma_\tau)$ and $C^\infty_{sc}(M)$ is dense in $L^2\locsc(t(M),H^{k-1}(\Sigma_\bullet))$ by \lref{lem:Dicht} and any solution with smooth Cauchy data is smooth by \cite[Thm.~3.2.11]{BGP07}.
\end{proof}

\begin{rem}
As noted in the proof of \tref{thm:Cauchy1}, it is well known that any smooth solution $u$ of the Cauchy problem $Pu=f$, $u|_\Sigma=u_0$, $\na_\nu u|_\Sigma=u_1$ satisfies $\supp(u)\subset J(K)$ if $\supp(u_0) \cup \supp(u_1) \subset K$ and $\supp(f)\subset J(K)$ where $K$ is a compact subset of $\Sigma$.
Since smooth sections are dense in $\FE^k(M,P,t)$, the same is true for all solutions of finite $k$-energy by continuity.
This fact is known as \emph{finiteness of the speed of propagation}.
\end{rem}

Now we can state the energy estimate with the optimal regularity assumption on the section $u$:

\begin{cor}\label{cor:EnEstOptimal}
The energy estimate \eqref{eq:EnEst} holds for all $t_0,t_1\in[T_0,T_1]$ with $t_0<t_1$ and for all $u\in \FE^{k}(M,P,t)$ with $\supp(u)\subset J(K)$.
\end{cor}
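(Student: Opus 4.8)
The plan is to deduce the corollary from the preliminary energy estimate \tref{thm:enest} by an approximation argument, exploiting that smooth sections satisfy the stronger hypotheses of \tref{thm:enest} automatically and that they are dense in $\FE^k(M,P,t)$. Fix $[T_0,T_1]$, $K$, and $k$, and let $u\in\FE^k(M,P,t)$ with $\supp(u)\subset J(K)$. First I would approximate $u$ as in the proof of \cref{cor:dense}: apply the continuous solution operator $\Solve$ constructed in the proof of \tref{thm:Cauchy1} to smooth compactly supported approximations of the Cauchy data $(u|_{\Sigma_\tau},\na_\nu u|_{\Sigma_\tau},Pu)$, obtained via \lref{lem:Dicht} and the density of $C^\infty_c$ in the relevant Sobolev spaces. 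This produces a sequence $u_j\in C^\infty_{sc}(M)$ with $u_j\to u$ in $\FE^k(M,P,t)$. By finiteness of the speed of propagation the supports can be controlled: fixing once and for all a compact set $K'$ with $K$ in its interior, all the $u_j$ can be taken with $\supp(u_j)\subset J(K')$.

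Each $u_j$ is smooth, hence $u_j\in\FE^{k+1}(M,t;S)$ and $Pu_j\in C^\infty_{sc}(M)\subset C^0_{sc}(t(M),H^{k-1}(\Sigma_\bullet))$, so \tref{thm:enest} applies to $u_j$ with the single constant $C$ it provides for the data $([T_0,T_1],K',k)$. It then remains to let $j\to\infty$ in the resulting inequality. For the boundary terms I would use that $u_j\to u$ in $\FE^k(M,t;S)$, so by continuity of restriction to a slice (as in the proof of \tref{thm:Cauchy1}) one has $u_j|_{\Sigma_{t_i}}\to u|_{\Sigma_{t_i}}$ in $H^k(\Sigma_{t_i})$ and $\na_t u_j|_{\Sigma_{t_i}}\to\na_t u|_{\Sigma_{t_i}}$ in $H^{k-1}(\Sigma_{t_i})$ for $i=0,1$; since multiplication by the smooth positive function $\beta^{-1/2}$ is continuous on $H^{k-1}(\Sigma_{t_i})$, this gives $E_k(u_j,t_i)\to E_k(u,t_i)$. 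For the integral term I would use that $P$ is continuous into $L^2\locsc(t(M),H^{k-1}(\Sigma_\bullet))$, so $Pu_j\to Pu$ there, whence the scalar functions $s\mapsto\|Pu_j\|_{H^{k-1}(\Sigma_s)}$ converge to $s\mapsto\|Pu\|_{H^{k-1}(\Sigma_s)}$ in $L^2([t_0,t_1])$. Passing to the limit in \eqref{eq:EnEst} for the $u_j$ then yields \eqref{eq:EnEst} for $u$, with the constant $C$ belonging to $K'$.

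The only genuinely analytic point is the convergence of the integral term, where one must pass from $L^2$-convergence of the norms to $L^1$-convergence of their squares; this follows from $\int_{t_0}^{t_1}\big|\,\|Pu_j\|_{H^{k-1}}^2-\|Pu\|_{H^{k-1}}^2\,\big|\,ds \le \big\|\,\|Pu_j\|_{H^{k-1}}-\|Pu\|_{H^{k-1}}\,\big\|_{L^2}\cdot\big\|\,\|Pu_j\|_{H^{k-1}}+\|Pu\|_{H^{k-1}}\,\big\|_{L^2}$ by Cauchy--Schwarz, together with the reverse triangle inequality $\big|\,\|Pu_j\|-\|Pu\|\,\big|\le\|Pu_j-Pu\|$ and the boundedness of the weight $e^{C(t_1-s)}$ on $[t_0,t_1]$. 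The other point to keep in mind is the support bookkeeping: the smoothing in \lref{lem:Dicht} enlarges supports slightly, so one works with $K'\supset K$ rather than with $K$ itself. This is harmless, since $\supp(u)\subset J(K)\subset J(K')$ and the resulting inequality is exactly of the form \eqref{eq:EnEst} with a constant depending only on $[T_0,T_1]$, $K'$, and $k$, which is the assertion.
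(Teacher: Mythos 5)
Your proposal is correct and follows essentially the same route as the paper: approximate $u$ by smooth sections via \cref{cor:dense} (i.e.\ via $\Solve$ applied to smoothed Cauchy data), apply \tref{thm:enest} to the approximants, and pass to the limit using continuity of all terms in \eqref{eq:EnEst} with respect to the topology of $\FE^k(M,P,t)$. The paper states this continuity-and-density argument in three lines; you additionally spell out the support bookkeeping (working with $J(K')\supset J(K)$ and a single constant) and the passage from $L^2$-convergence of $s\mapsto\|Pu_j\|_{H^{k-1}(\Sigma_s)}$ to $L^1$-convergence of its square, both of which are implicit in the paper's proof.
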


\begin{proof}
By \tref{thm:enest}, \eqref{eq:EnEst} holds for all $u\in C^\infty_{sc}(M)$ with $\supp(u)\subset J(K)$.
By \cref{cor:dense}, $C^\infty_{sc}(M)$ is dense in $\FE^k(M,P,t)$.
All terms in \eqref{eq:EnEst} are continuous with respect to the topology of $\FE^k(M,P,t)$.
Hence \eqref{eq:EnEst} holds for all $u\in \FE^{k}(M,P,t)$ with $\supp(u)\subset J(K)$.
\end{proof}

The space of finite $k$-energy solutions is independent of the choice of Cauchy temporal function.

\begin{cor}\label{cor:FEkerPIndep}
Let $t$ and $\tt$ be Cauchy temporal functions on $M$.
Then for every $k\in\R$
\[
\FE^k(M,\ker(P),t) = \FE^k(M,\ker(P),\tt) .
\]
\end{cor}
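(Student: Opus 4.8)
By symmetry it suffices to prove one inclusion, say $\FE^k(M,\ker(P),t)\subseteq\FE^k(M,\ker(P),\tt)$. So fix $u\in\FE^k(M,\ker(P),t)$, where I write $\tS_s:=\tt^{-1}(s)$ for the level sets of the second time function. By definition $u$ has spatially compact support, say $\supp(u)\subset J(K)$ with $K$ compact, and this is a purely geometric condition that does not refer to any time function. Since moreover $Pu=0$, what remains is to check that $u\in C^0_{sc}(\tt(M),H^k(\tS_\bullet))\cap C^1_{sc}(\tt(M),H^{k-1}(\tS_\bullet))$.

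The plan is to reduce to smooth solutions and then compare the two finite-energy topologies on them. By \cref{cor:dense} there are $u_j\in C^\infty_{sc}(M)\cap\ker(P)$ with $u_j\to u$ in $\FE^k(M,\ker(P),t)$, and by finiteness of the speed of propagation we may arrange $\supp(u_j)\subset J(K')$ for one fixed compact $K'$. Both finite-energy spaces embed continuously into $\D'(M;S)$ via \eqref{eq:SecDist}, so $u_j\to u$ also in $\D'(M;S)$. It therefore suffices to show that $(u_j)$ is a Cauchy sequence in $C^0_{J(K')}(\tt(M),H^k(\tS_\bullet))\cap C^1_{J(K')}(\tt(M),H^{k-1}(\tS_\bullet))$: this space is complete, so its limit exists there, and by the continuous embedding into $\D'(M;S)$ that limit must equal $u$, whence $u\in\FE^k(M,\ker(P),\tt)$. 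Everything thus comes down to the following claim: for smooth solutions $w$ of $Pw=0$ with $\supp(w)\subset J(K')$, the $\tt$-finite-energy seminorms of $w$ are bounded by finitely many $t$-finite-energy seminorms of $w$, with constants independent of $w$.

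To prove the claim, fix a compact subinterval $\tilde I\subset\tt(M)$. Applying the energy estimate \tref{thm:enest} to the Cauchy temporal function $\tt$ (and using $Pw=0$), the full $\tt$-energy $E_k(w,\tS_s)$ for $s\in\tilde I$ is controlled by its value $E_k(w,\tS_{\tilde\tau})$ on a single slice. It then remains to bound $\|w|_{\tS_{\tilde\tau}}\|^2_{H^k(\tS_{\tilde\tau})}+\|\na_{\tilde\nu}w\|^2_{H^{k-1}(\tS_{\tilde\tau})}$ by $t$-finite-energy seminorms of $w$ over a compact $t$-region containing the compact set $\tS_{\tilde\tau}\cap J(K')$. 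Here the solution property of $w$ is decisive: differentiating \eqref{eq:Solu} repeatedly (with $Pw=0$) trades two $t$-derivatives for two $\Sigma_t$-derivatives, so that $\na_t^\ell w\in C^0_{sc}(t(M),H^{k-\ell}(\Sigma_\bullet))$ for every $\ell$, with the corresponding seminorms estimated by the $t$-finite-energy data of $w$. Localising by finiteness of the speed of propagation, one covers $\tS_{\tilde\tau}\cap J(K')$ by finitely many charts in which $\tS_{\tilde\tau}$ appears as a spacelike graph over a piece of a $t$-slice; since the conormal of such a graph is timelike and hence non-characteristic for $P$, the anisotropic regularity just described yields the desired trace bound in each chart. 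I expect this single-slice cross-foliation estimate to be the technical heart of the argument, precisely because for a generic finite-energy section that is \emph{not} a solution the analogous bound fails, as the boost example in \ref{sss:DepTime} shows; it is the equation $Pw=0$ that makes the restriction to a spacelike hypersurface regularity-preserving.
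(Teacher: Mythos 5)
Your scaffolding --- approximate $u$ by smooth solutions $u_j$ supported in a fixed $J(K')$ (\cref{cor:dense}), prove a uniform bound of the $\tt$-energy seminorms by finitely many $t$-energy seminorms for smooth solutions, then conclude by completeness of $C^0_{J(K')}(\tt(M),H^k(\tS_\bullet))\cap C^1_{J(K')}(\tt(M),H^{k-1}(\tS_\bullet))$ and the embedding into $\D'(M;S)$ --- is sound, and it is a genuinely different strategy from the paper's. The gap is in the step you yourself call the technical heart: the claim that ``anisotropic regularity plus timelike (non-characteristic) conormal yields the trace bound in each chart'' is false. The anisotropic regularity you extract from \eqref{eq:Solu}, namely $\na_t^\ell w\in C^0_{sc}(t(M),H^{k-\ell}(\Sigma_\bullet))$ for all $\ell$ with norms controlled by the $t$-energy, retains no memory of \emph{which} wave operator $w$ solves, i.e.\ of where the light cones of $g$ lie --- but the trace bound depends on exactly that. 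Concretely, in $(2+1)$-dimensional Minkowski space take
\[
w(t,x)=\int_{\R^2}a(\xi)\,e^{i(x\cdot\xi+2t|\xi|)}\,d\xi ,
\qquad \langle\xi\rangle^k a\in L^2,
\]
a solution of the \emph{speed-two} wave equation $\partial_t^2w=4\Delta w$. Then $\|\na_t^\ell w(t,\cdot)\|_{H^{k-\ell}}\le 2^\ell\|\langle\xi\rangle^k a\|_{L^2}$ for every $\ell\in\N_0$, so $w$ has precisely the anisotropic regularity, with uniform bounds, that your induction produces (the lack of spatially compact support is immaterial, since your claim is local in each chart). The hyperplane $\{t=\epsilon x_1\}$ with $\tfrac12<\epsilon<1$ is spacelike for the Minkowski metric, so its conormal is timelike and non-characteristic for $\Box$; yet it is \emph{timelike} for the speed-two operator, and for suitable $a$ (superpositions of Knapp-type packets concentrated near the fold of the map $\xi\mapsto\xi_1+2\epsilon|\xi|$; this is the classical sharp loss for traces of waves on hypersurfaces that are timelike for the operator they solve) the restriction of $w$ to this hyperplane fails to lie in $H^k\loc$. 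Hence derivative counting plus non-characteristicity cannot give your single-slice estimate. What actually makes traces of $P$-solutions on $g$-spacelike hypersurfaces regularity-preserving is the positivity of the energy flux of $P$ through such hypersurfaces, i.e.\ a genuine cross-foliation energy estimate (a stress-energy/multiplier argument along a foliation interpolating between a $t$-slice and $\tS_{\tilde\tau}$), which your proposal does not supply; supplying it for every $k\in\R$, rather than for integer $k\ge1$ where one can commute derivatives, would require pseudodifferential symmetrizers and essentially amounts to re-proving \tref{thm:Cauchy1} for a third foliation.

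This is exactly the difficulty the paper's proof is engineered to avoid: it never compares energies of two foliations analytically. Instead, (a) if $t$ and $\tt$ share a level Cauchy hypersurface $\Sigma_0$, both solution spaces are isomorphic to $H^k_c(\Sigma_0)\oplus H^{k-1}_c(\Sigma_0)$ by \cref{cor:Cauchy2}, and by uniqueness (\cref{cor:CauchyUnique}) together with density of smooth solutions (\cref{cor:dense}) the composition of these isomorphisms is the identity, so the spaces agree; (b) if the two functions have disjoint level hypersurfaces, the Bernal--S\'anchez theorem \cite{BS06} yields an auxiliary Cauchy temporal function having both as level sets, and one applies (a) twice; (c) in general one inserts a further Cauchy hypersurface $\check\Sigma\subset I^+(\Sigma_\tau)\cap I^+(\tS_{\tilde\tau})$ and applies (b) twice. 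All the analysis is thus concentrated in the single-foliation well-posedness theorem, which you already have; to repair your argument, replace the chart-by-chart trace claim by this chain of common-hypersurface comparisons, or else prove the flux estimate honestly.
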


\begin{proof}
a)
Let $\Sigma_\bullet$ and $\tS_\bullet$ be the foliations by Cauchy hypersurfaces corresponding to $t$ and to $\tt$, respectively.
We assume first that $\Sigma_\bullet$ and $\tS_\bullet$ have one Cauchy hypersurface $\Sigma_0$ in common.
By \cref{cor:CauchyUnique}, finite $k$-energy solutions to the equation $Pu=0$ are uniquely determined by their Cauchy data.
Thus the composition of the isomorphisms, both given by $u\mapsto (u|_{\Sigma_0},\na_\nu u|_{\Sigma_0})$,
\[
\FE^k(M,\ker(P),t)
\xrightarrow{\cong}
H^k_c(\Sigma_0) \oplus H^{k-1}_c(\Sigma_0)
\xleftarrow{\cong}
\FE^k(M,\ker(P),\tt)
\]
must be the identity map, at least for $u\in C^\infty_{sc}(M) \cap \ker(P)$.
Since $C^\infty_{sc}(M) \cap \ker(P)$ is dense in both $\FE^k(M,\ker(P),t)$ and $\FE^k(M,\ker(P),\tt)$ by \cref{cor:dense}, the assertion follows in case of a common Cauchy hypersurface.

b)
Next we drop the assumption that the Cauchy temporal functions have a Cauchy hypersurface in common, but we assume that there are Cauchy hypersurfaces $\Sigma_\tau$ and $\tS_{\tilde\tau}$ which are disjoint.
W.l.o.g.\ we assume that $\Sigma_\tau$ lies in the past of $\tS_{\tilde\tau}$.

In \cite{BS06} it is shown that given a smooth spacelike Cauchy hypersurface, one can find a Cauchy temporal function such that the given Cauchy hypersurface appears as a level set of the function (Theorem~1.2.B).
A minor modification of the proof also shows that one can prescribe two disjoint Cauchy hypersurfaces as level sets.
Hence there exists a Cauchy temporal function $\hat{t}:M\to\R$ such that $\widehat\Sigma_0=\Sigma_\tau$ and $\widehat\Sigma_1=\tS_{\tilde\tau}$.
Applying part a) twice we get
\[
\FE^k(M,\ker(P),t) = \FE^k(M,\ker(P),\hat{t}) = \FE^k(M,\ker(P),\tt) \,.
\]

c)
Now we drop all additional assumptions on $t$ and $\tt$.
We fix Cauchy hypersurfaces $\Sigma_\tau$ and $\tS_{\tilde\tau}$.
Since $I^+(\Sigma_\tau) \cap I^+(\tS_{\tilde\tau})$ is globally hyperbolic, it contains a smooth spacelike Cauchy hypersurface $\check\Sigma$.
One easily sees that $\check\Sigma$ is also a Cauchy hypersurface for $M$.
\begin{center}
\begin{pspicture}(-4,-2)(4,2)
\psline[linecolor=lightgray,fillcolor=lightgray,fillstyle=solid](-2,-2)(2,-2)(2,2)(-2,2)
\psecurve[linewidth=.5mm](-3,-1.5)(-2,-1.1)(0,-1)(2,-.6)(3,-1)
\psecurve[linewidth=.5mm](-3,-1)(-2,-.7)(0,-1)(2,-1.6)(3,-1.2)
\psecurve[linewidth=.5mm](-3,.7)(-2,.5)(0,1)(2,.9)(3,1)

\uput{0}[0](1,1){\psframebox*[framearc=.3]{$\check\Sigma$}}
\uput{0}[0](-1.7,-0.6){\psframebox*[framearc=.3]{$\tS_{\tilde\tau}$}}
\uput{0}[0](1,-.8){\psframebox*[framearc=.3]{$\Sigma_\tau$}}
\end{pspicture}

\emph{Fig.~2}
\end{center}
Applying \cite[Thm.~1.2.B]{BS06} we find a Cauchy temporal function $\check{t}:M\to\R$ possessing $\check\Sigma$ as a level set.
Since $\Sigma_\tau$ and $\check\Sigma$ are disjoint as well as $\tS_{\tilde\tau}$ and $\check\Sigma$, we can apply part b) twice and we get
\[
\FE^k(M,\ker(P),t) = \FE^k(M,\ker(P),\check{t}) = \FE^k(M,\ker(P),\tt) \,. \qedhere
\]
\end{proof}

We have seen that, in general, the space $\FE^k(M,P,t)$ does depend on the choice of Cauchy temporal function $t$ but there is an important exception:

\begin{cor}\label{cor:FEPIndep}
Let $t$ and $\tt$ be Cauchy temporal functions on $M$.
Then
\[
\FE^1(M,P,t) = \FE^1(M,P,\tt) .
\]
\end{cor}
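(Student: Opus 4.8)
The plan is to reduce to the homogeneous case, already settled in \cref{cor:FEkerPIndep}, by subtracting a suitable particular solution. The decisive feature of the exponent $k=1$ is that the inhomogeneity space is then independent of the time function: as shown earlier, $L^2\locsc(t(M),H^0(\Sigma_\bullet))=L^2\locsc(M)=L^2\locsc(\tt(M),H^0(\tS_\bullet))$. Hence, for $u\in\D'(M;S)$, the condition $Pu\in L^2\locsc(t(M),H^0(\Sigma_\bullet))$ makes no reference to the splitting, and the only possible discrepancy between $\FE^1(M,P,t)$ and $\FE^1(M,P,\tt)$ lies in the finite-energy regularity of $u$ measured by $\FE^1(M,\cdot\,;S)$.

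I would first treat the case where the foliations $\Sigma_\bullet$ and $\tS_\bullet$ share a Cauchy hypersurface $\Sigma_0=\tS_0$. The central step is to construct, for a given $u\in\FE^1(M,P,t)$ with $f:=Pu$, one section $w\in\FE^1(M,P,t)\cap\FE^1(M,P,\tt)$ with $Pw=f$. Since $u$ has spatially compact support, so does $f$; by \lref{lem:Dicht} (with $k=0$) I choose $f_n\in C^\infty_{sc}(M)$ with $f_n\to f$ in $L^2\locsc(M)$ and supports in a fixed set $J(K_1)$, $K_1\subset M$ compact. Solving $Pw_n=f_n$ with vanishing Cauchy data along $\Sigma_0$ produces smooth $w_n$ with $\supp(w_n)\subset J(K_1)$, see \cite[Thm.~3.2.11]{BGP07}. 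Applying the energy estimate \tref{thm:enest} (in both time directions) to $w_n-w_m$ with respect to $t$, with this compact $K_1$, the Cauchy term along $\Sigma_0$ vanishes and one obtains $\sup_{s\in[T_0,T_1]}E_1(w_n-w_m,\Sigma_s)\le C\,\|f_n-f_m\|^2_{[T_0,T_1],J(K_1),0}\to 0$ on every compact subinterval $[T_0,T_1]$; this is exactly convergence of $(w_n)$ in the seminorms defining $\FE^1(M,P,t)$, the inhomogeneous part converging since $f_n\to f$. Because this approximation takes place in the time-independent space $L^2\locsc(M)$, the \emph{same} sequence $(w_n)$ is Cauchy also with respect to $\tt$ by the energy estimate for $\tt$, using that $\Sigma_0=\tS_0$ carries vanishing Cauchy data there as well. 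The two limits exist by completeness of the relevant Fr\'echet spaces, agree in $\D'(M;S)$, and furnish the desired $w$.

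Granting such a $w$, the reduction is immediate: $u-w\in\FE^1(M,t;S)$ and $P(u-w)=0$, so $u-w\in\FE^1(M,\ker(P),t)$; by \cref{cor:FEkerPIndep} then $u-w\in\FE^1(M,\ker(P),\tt)\subset\FE^1(M,\tt;S)$, and since $w\in\FE^1(M,\tt;S)$ we get $u=(u-w)+w\in\FE^1(M,\tt;S)$, whence $u\in\FE^1(M,P,\tt)$ because $Pu=f\in L^2\locsc(M)$. The symmetric argument yields equality whenever a common Cauchy hypersurface exists. I would then drop this hypothesis exactly as in the proof of \cref{cor:FEkerPIndep}: first insert an auxiliary Cauchy temporal function realizing two prescribed disjoint Cauchy hypersurfaces as level sets \cite[Thm.~1.2.B]{BS06}, and afterwards pass through a Cauchy hypersurface $\check\Sigma\subset I^+(\Sigma_\tau)\cap I^+(\tS_{\tilde\tau})$, so that the common-hypersurface case applies at each link of the chain.

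The crux is the construction of the common particular solution $w$. One cannot simply identify $u$ with its Cauchy data on a level set of $\tt$, as in \cref{cor:FEkerPIndep}, since the assertion that $u|_{\tS_{\tilde\tau}}$ and $\na_\nu u|_{\tS_{\tilde\tau}}$ lie in $H^1_c(\tS_{\tilde\tau})\oplus H^0_c(\tS_{\tilde\tau})$ is precisely the regularity to be established, so that route is circular. It is the time-independence of $L^2\locsc(M)$ at $k=1$ that lets a single smooth approximation $f_n\to f$ serve both foliations simultaneously and thereby breaks the circularity.
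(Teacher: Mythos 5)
Your proof is correct, but it follows a different route than the paper, which disposes of the statement in two lines. The paper invokes the isomorphism of \tref{thm:Cauchy1} at $k=1$, $u\mapsto (u|_{\Sigma_0},\na_\nu u|_{\Sigma_0},Pu)$, whose target $H^1_c(\Sigma_0)\oplus H^0_c(\Sigma_0)\oplus L^2\locsc(M)$ is independent of the time function once the two foliations share the level set $\Sigma_0$, and then repeats the reasoning of \cref{cor:FEkerPIndep} on this larger isomorphism: the composition of the $t$-isomorphism with the inverse of the $\tt$-isomorphism is the identity on the dense subspace of smooth sections (\cref{cor:dense} together with uniqueness, \cref{cor:CauchyUnique}), hence the identity everywhere; the chain argument (parts b) and c) of that proof) then removes the common-hypersurface hypothesis. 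You instead reduce to the homogeneous statement \cref{cor:FEkerPIndep} by subtracting a particular solution $w$ with $Pw=f$ lying in both spaces, and you construct $w$ by hand from \lref{lem:Dicht}, smooth solvability of the Cauchy problem, and the energy estimate \tref{thm:enest} run with respect to both time functions; the two limits agree in $\D'(M;S)$, so the subtraction trick $u=(u-w)+w$ closes the argument. This is valid, and it has the merit of isolating exactly where new work beyond \cref{cor:FEkerPIndep} is needed, but it amounts to re-proving, for vanishing Cauchy data, the continuity of the solution map that \tref{thm:Cauchy1} already provides: you could simply have set $w:=\Solve_t(0,0,f)$ and $\tilde w:=\Solve_{\tt}(0,0,f)$ and identified $w=\tilde w$ via \lref{lem:Dicht}, continuity of both solution maps, and uniqueness for smooth data. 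One inaccuracy in your closing remark: the paper-style identification is \emph{not} circular. One restricts $u$ to the \emph{common} level set $\Sigma_0$, where its Cauchy data are controlled by the $t$-structure alone, solves with respect to $\tt$, and identifies the result with $u$ by density of smooth sections in $\FE^1(M,P,t)$; no a priori regularity of $u$ along a hypersurface that is a level set of $\tt$ only is ever invoked.
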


\begin{proof}
We recall from Subsection~\ref{sss:DepTime} that $L^2\locsc(t(M),H^0(\Sigma_\bullet))$ is independent of $t$.
Using the isomorphism 
\begin{align*}
\FE^1(M,P,t)
\to
H^1_c(\Sigma_\tau) &\oplus H^{0}_c(\Sigma_\tau) \oplus L^2\locsc(t(M),H^{0}(\Sigma_\bullet)) \, , \\
u &\mapsto (u|_{\Sigma_0},\na_\nu u|_{\Sigma_0},Pu) \, ,
\end{align*}
the same reasoning as in the proof of \cref{cor:FEkerPIndep} yields the claim.
\end{proof}

Thus there is no need to keep the Cauchy temporal function $t$ in the notation for finite $k$-energy \emph{solutions}.
From now on we will briefly write $\FE^k(M,\ker(P))$ instead of $\FE^k(M,\ker(P),t)$ and $\FE^1(M,P)$ instead of $\FE^1(M,P,t)$.

\begin{cor}\label{cor:FEHk}
For each $k\in\N$ we have the continuous embedding
\begin{equation}
\FE^k(M,\ker(P))
\hookrightarrow
H^k\loc(M).
\label{eq:Hkreg}
\end{equation}
Moreover, we have a continuous embedding
\begin{equation}
\FE^1(M,P)
\hookrightarrow
H^1\loc(M).
\label{eq:H1reg}
\end{equation}
\end{cor}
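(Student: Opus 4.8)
The plan is to derive both embeddings from the spacetime Sobolev estimate of \cref{cor:Hkest} combined with the density of smooth sections provided by \cref{cor:dense}. The guiding observation is that, for a finite energy section, the right-hand side of the estimate in \cref{cor:Hkest} is controlled by the seminorms defining the topology of $\FE^k(M,\ker(P))$, respectively $\FE^1(M,P)$, once the term involving $Pu$ is treated correctly.

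First I would localize. Fix a cutoff $\chi\in C^\infty_c(M,\R)$; its compact support lies in some slab $\Sigma[t_0,t_1]=t^{-1}([t_0,t_1])$. Since multiplication by $\chi$ is bounded on $H^k$ and $\supp(\chi u)\subset\Sigma[t_0,t_1]$, we have $\|\chi u\|_{H^k(M)}\le C_\chi\,\|u\|_{H^k(\Sigma[t_0,t_1])}$, so it suffices to bound the slab norm by the finite energy seminorms. An element $u$ of $\FE^k(M,\ker(P))$ or of $\FE^1(M,P)$ has spatially compact support, say $\supp(u)\subset J(K)$ with $K$ compact, so \cref{cor:Hkest} applies to smooth sections supported in $J(K)$ and gives
\[
\|u\|^2_{H^k(\Sigma[t_0,t_1])}
\le
C\big(E_k(u,t_0)+\|Pu\|^2_{H^{k-1}(\Sigma[t_0,t_1])}\big).
\]

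Next I would treat the two cases separately. For $u\in\FE^k(M,\ker(P))$ we have $Pu=0$, so the second term disappears and the right-hand side equals $C\cdot E_k(u,t_0)$, which is dominated by the $\FE^k$-seminorms evaluated on a compact interval containing $t_0$. For $u\in\FE^1(M,P)$, that is $k=1$, the remaining term is $\|Pu\|^2_{H^0(\Sigma[t_0,t_1])}=\|Pu\|^2_{L^2(\Sigma[t_0,t_1])}$, which by the computation in Subsection~\ref{sss:DepTime} agrees, up to the $\beta^{1/2}$-factors, with $\int_{t_0}^{t_1}\|Pu(s)\|^2_{L^2(\Sigma_s)}\,ds$ and is therefore controlled by the $L^2\locsc(t(M),H^0(\Sigma_\bullet))$-seminorm of $Pu$; by continuity of $P:\FE^1(M,P)\to L^2\locsc(t(M),H^0(\Sigma_\bullet))$ this is again a continuous seminorm on $\FE^1(M,P)$. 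In both cases we obtain, for smooth $u$, an estimate $\|\chi u\|_{H^k(M)}\le C'\,p(u)$ with $p$ a continuous seminorm on the relevant finite energy space and $C'$ independent of $u$.

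Finally I would pass to the limit. By \cref{cor:dense}, $C^\infty_{sc}(M)\cap\ker(P)$ is dense in $\FE^k(M,\ker(P))$ and $C^\infty_{sc}(M)$ is dense in $\FE^1(M,P)$; by finiteness of the speed of propagation the approximants can be chosen with supports in a common spatially compact set $J(K)$, so that the estimate above applies to them uniformly. Given $u$ in the respective space, approximate it by such smooth sections $u_j\to u$; the estimate shows that $(u_j)$ is Cauchy in $H^k\loc(M)$ and hence converges there to some $v$. As both the finite energy topology and the $H^k\loc$-topology are finer than the distributional one, $u_j\to u$ and $u_j\to v$ in $\D'(M)$, whence $v=u$ and $u\in H^k\loc(M)$; letting $j\to\infty$ in $\|\chi u_j\|_{H^k(M)}\le C'\,p(u_j)$ yields continuity of the embedding. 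The substance of the argument is already contained in \cref{cor:Hkest}; the only points requiring genuine care are the identification of the two $L^2$-norms in the case $k=1$ and the soft identification of the abstract limit $v$ with the distributional section $u$.
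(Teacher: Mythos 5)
Your proposal is correct and takes essentially the same route as the paper: both arguments rest on the slab estimate of \cref{cor:Hkest} (with the $Pu$-term dropping in the kernel case and, for $k=1$, controlled via the identification of $\|Pu\|_{L^2(\Sigma[t_0,t_1])}$ with the $L^2\locsc(t(M),H^0(\Sigma_\bullet))$-seminorm and the continuity of $P$ on $\FE^1(M,P)$), combined with the density of smooth sections from \cref{cor:dense}. Your Cauchy-sequence limiting argument and the identification of the limit in $\D'(M)$ are simply a more explicit rendering of the paper's terse appeal to density.
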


\begin{proof}
Let $K\subset M$ be compact.
Then we can choose $[T_0,T_1]\subset t(M)$ such that $K\subset \Sigma[T_0,T_1]$.
Let $K'\subset\Sigma_{T_0}$ be compact.
\cref{cor:Hkest} yields the estimate
\begin{align*}
\|u\|^2_{H^k(K)}
&\leq
\|u\|^2_{H^k(\Sigma[T_0,T_1])} \\
&\leq
C_1\cdot\left(\|u|_{\Sigma_{T_0}}\|^2_{H^k(\Sigma_{T_0})}
+ \|\na_tu\|^2_{H^{k-1}(\Sigma_{T_0})}\right) \\
&\le
C_1\cdot\left(\|u\|^2_{I,J(K'),0,k} +  \|u\|^2_{I,J(K'),1,k-1}\right)
\end{align*}
for all $u\in C^\infty_{J(K')}(M)\cap \ker(P)$.
Here $I\subset t(M)$ is any compact subinterval which contains $T_0$.
Since $C^\infty_{sc}(M)\cap \ker(P)$ is dense in $\FE^k(M,\ker(P))$ by \cref{cor:dense}, the same estimate holds for all $u\in C^0_{J(K')}(t(M),H^k(\Sigma_\bullet)) \cap C^1_{J(K')}(t(M),H^{k-1}(\Sigma_\bullet)) \cap \ker(P)$.
This shows that we have a continuous embedding
\[
C^0_{J(K')}(t(M),H^k(\Sigma_\bullet)) \cap C^1_{J(K')}(t(M),H^{k-1}(\Sigma_\bullet)) \cap \ker(P)
\hookrightarrow
H^k\loc(M)
\]
for every compact subset $K'\subset \Sigma_{T_0}$.
Since every spatially compact subset of $M$ is contained in $J(K')$ for some compact subset $K'\subset \Sigma_{T_0}$, we get a continuous embedding as in \eqref{eq:Hkreg}.

The embedding $\FE^1(M,P)\hookrightarrow H^1\loc(M)$ is obtained similarly using the estimate $\|u\|^2_{H^1(\Sigma[t_0,t_1])}
\leq C\cdot\left(\|u|_{\Sigma_{t_0}}\|^2_{H^1(\Sigma_{t_0})} + \|\na_tu\|^2_{L^2(\Sigma_{t_0})} + \|Pu\|^2_{L^2(\Sigma[t_0,t_1])}\right)$.
\end{proof}

\section{The Goursat problem}

The Goursat problem is an initial value problem with initial data prescribed on a characteristic Cauchy hypersurface $\Sigma$.
For wave operators, ``characteristic'' means lightlike, i.e.\ the Lorentzian metric induces a degenerate metric on $\Sigma$.
A typical example would be the light cone $\Sigma=\partial J^+(x)$ for any point $x\in M$.
This $\Sigma$ is no longer smooth and we will not assume smoothness in the sequel.

Now there is an issue defining the right function spaces on $\Sigma$.
If $\Sigma$ is not smooth, $C^k$-sections are no longer defined for $k>0$.
Moreover, if the induced metric is degenerate, then the induced volume density vanishes.
This makes it difficult to define Sobolev spaces.

We solve these difficulties as follows:
since we expect the solutions of the initial value problem to be finite energy sections, we simply take the restrictions of those as admissible initial values.

\begin{dfn}
Let $\Sigma\subset M$ be a Lipschitz hypersurface.
Then we put
\[
H^1_c(\Sigma;S) := H^1_c(\Sigma) := \{u|_\Sigma \mid u\in \FE^1(M,P)\} \,.
\]
\end{dfn}

Note that by \cref{cor:FEHk}, $\FE^1(M,P) \subset H^1\loc(M)$.
By the trace theorem for Lipschitz hypersurfaces \cite[Thm.~1]{D96}, the restriction $u|_\Sigma$ is well defined, at least as an $H^{1/2}\loc$-section.
It is convenient here that $\FE^1(M,P)$ does not depend on the choice of a Cauchy temporal function by \cref{cor:FEPIndep}.
If $\Sigma$ is smooth and spacelike, this yields the same space as the Sobolev space $H^1_c(\Sigma)$ defined in Subsection~\ref{sss:SobCompSupp}.

\subsection{A general existence result}
To prepare for the existence part in the Goursat problem we first observe the following general existence theorem which does not yet make any reference to the initial value surface being characteristic.

\begin{thm}\label{thm:GeneralExistence}
Let $\Sigma\subset M$ be any Cauchy hypersurface.
For any $f\in L^2\locsc(M)$ and any $u_0\in H^1_c(\Sigma)$ there exists $u\in\FE^1(M,P)$ such that $Pu=f$ and $u|_\Sigma=u_0$.
\end{thm}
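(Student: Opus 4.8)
The plan is to reduce to vanishing initial data and then to build $u$ from a retarded and an advanced piece supported to the future resp.\ past of $\Sigma$. By definition of $H^1_c(\Sigma)$ I may pick $w\in\FE^1(M,P)$ with $w|_\Sigma=u_0$ and put $\tilde f:=f-Pw$. Since $f\in L^2\locsc(M)$ and $Pw\in L^2\locsc(t(M),H^0(\Sigma_\bullet))=L^2\locsc(M)$ (Subsection~\ref{sss:DepTime}), we have $\tilde f\in L^2\locsc(M)$. It thus suffices to find $h\in\FE^1(M,P)$ with $Ph=\tilde f$ and $h|_\Sigma=0$, for then $u:=w+h$ solves the problem.

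Since $\Sigma$ is a Cauchy hypersurface, $M=J^+(\Sigma)\cup J^-(\Sigma)$ and $J^+(\Sigma)\cap J^-(\Sigma)=\Sigma$ has measure zero, so $\tilde f=g^++g^-$ almost everywhere with $g^\pm:=\mathbf{1}_{J^\pm(\Sigma)}\tilde f\in L^2\locsc(M)$ and $\supp(g^\pm)\subset J^\pm(\Sigma)$. I would construct a retarded solution $h^+$ of $Ph^+=g^+$ with $\supp(h^+)\subset J^+(\Sigma)$ and, symmetrically by reversing the time orientation, an advanced solution $h^-$ of $Ph^-=g^-$ with $\supp(h^-)\subset J^-(\Sigma)$, and set $h:=h^++h^-$. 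Then $Ph=\tilde f$. Moreover $h^+$ vanishes on the open set $I^-(\Sigma)$ and $h^-$ on $I^+(\Sigma)$; as both lie in $H^1\loc(M)$ by \cref{cor:FEHk}, the trace theorem for Lipschitz hypersurfaces \cite[Thm.~1]{D96}, computed one-sidedly, gives $h^+|_\Sigma=h^-|_\Sigma=0$, hence $h|_\Sigma=0$.

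It remains to construct $h^+$. Fix a Cauchy temporal function $t$. The geometric input is that $J^+(\Sigma)$ is past compact: for $x\in M$ and $y\in J^+(\Sigma)\cap J^-(x)$ one finds $p\in\Sigma$ with $p\le y\le x$, so $p\in\Sigma\cap J^-(x)$ and $t(y)\ge t(p)\ge\min t|_{\Sigma\cap J^-(x)}=:a$; thus $J^+(\Sigma)\cap J^-(x)\subset J^+(\Sigma_a)\cap J^-(x)$ and is compact. Consequently $\supp(g^+)\subset J^+(\Sigma)\cap J(K)$ is past compact and spatially compact, and one checks $J^+(\supp g^+)$ is again spatially compact. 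For $a\in t(M)$ set $g^+_a:=\mathbf{1}_{J^+(\Sigma_a)}g^+$ and let $u_a\in\FE^1(M,P)$ be the solution of $Pu_a=g^+_a$ with vanishing Cauchy data on $\Sigma_a$ furnished by \tref{thm:Cauchy1}; by finiteness of the speed of propagation (\cref{cor:CauchyUnique}), $u_a\equiv0$ on $J^-(\Sigma_a)$. Given $x$, past compactness yields $t_0:=\min\{t(y):y\in\supp(g^+)\cap J^-(x)\}>-\infty$, and for $a<a'<t_0$ the sources $g^+_a,g^+_{a'}$ coincide with $g^+$ on $J^-(x)$ while $u_{a'}$ has vanishing Cauchy data on $\Sigma_a\cap J^-(x)$; uniqueness in the slab then forces $u_a(x)=u_{a'}(x)$. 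Hence $h^+(x):=u_a(x)$ for $a\ll0$ is well defined, solves $Ph^+=g^+$, and has $\supp(h^+)\subset J^+(\supp g^+)\subset J^+(\Sigma)$.

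Finally, a compactness argument over each slab $\Sigma[T_0,T_1]$, using that $\supp(g^+)\cap J^-(\Sigma[T_0,T_1])$ is compact, shows that a single $a$ works throughout the slab, so $h^+$ coincides there with some $u_a\in\FE^1(M,P)$ and therefore $h^+\in\FE^1(M,P)$. I expect the main obstacle to be precisely this limiting construction of the retarded solution: establishing the past compactness of $J^+(\Sigma)$, proving that the Cauchy solutions $u_a$ stabilize as $a\to-\infty$, and verifying that the limit retains spatially compact support and finite $1$-energy. The reduction to $u_0=0$ and the future/past splitting of the source are routine by comparison.
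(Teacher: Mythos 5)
Your proof is correct, and its overall skeleton is the same as the paper's: reduce to $u_0=0$ by subtracting a representative $w\in\FE^1(M,P)$ of the initial datum, split the source into the parts supported in $J^+(\Sigma)$ and $J^-(\Sigma)$, produce for each part a solution supported in $J^+(\Sigma)$ resp.\ $J^-(\Sigma)$, and get the vanishing of the traces from the one-sided trace argument in $H^1\loc$. Where you genuinely differ is the construction of the retarded piece $h^+$. The paper does this in a single stroke: $I^-(\Sigma)$ is globally hyperbolic, hence by Bernal--S\'anchez \cite{BS05} contains a smooth spacelike Cauchy hypersurface $\Sigma_0$, which one checks is a Cauchy hypersurface of all of $M$; one application of \tref{thm:Cauchy1} with vanishing data on $\Sigma_0$ and source $\chi_+f$ then yields $u_+\in\FE^1(M,P)$ supported in $J^+(\Sigma)$, with no limiting procedure at all. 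Your exhaustion $a\to-\infty$ over the level sets $\Sigma_a$ avoids having to produce a Cauchy hypersurface inside $I^-(\Sigma)$, and it isolates a nice geometric fact the paper never states: $J^+(\Sigma)$ is automatically past compact when $\Sigma$ is a genuine Cauchy hypersurface (exactly the hypothesis that must be imposed by hand in \tref{thm:CharacteristicUniqueness} for partial Cauchy hypersurfaces). The price is the stabilization and gluing machinery, which the paper's route makes unnecessary.

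One step of that machinery deserves a warning. Your ``uniqueness in the slab'' is not an instance of \cref{cor:CauchyUnique}: that corollary is global in space, and the sources $g^+_a$, $g^+_{a'}$ agree only on $J^-(x)$, not on the whole slab $t^{-1}([a,t(x)])$. What you actually need is the sharp retarded support property: the difference $v=u_a-u_{a'}$ has vanishing Cauchy data on $\Sigma_a$ and source supported in $\supp(g^+)\cap t^{-1}([a,a'])$, hence $\supp(v)\subset J^+\big(\supp(g^+)\cap t^{-1}([a,a'])\big)$, a set not containing $x$. The weaker two-sided statement $\supp(v)\subset J(K_0)$ with $K_0\subset\Sigma_a$ compact, which is what the paper's finite-propagation-speed remark literally provides, does not suffice, since $x$ may well lie in $J^+(K_0)$. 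This sharp property (vanishing data plus future-supported source implies support in the causal future of the source) is precisely what the paper itself invokes without proof when it writes $\supp(u_+)\subset J^+(\supp(\chi_+f))$; it follows from the retarded Green's operator of \cite{BGP07} together with \cref{cor:CauchyUnique}, \lref{lem:Dicht} and \cref{cor:dense}. So your argument is on the same footing of rigor as the paper's, but you should cite this support property rather than slab uniqueness; similarly, $u_a\equiv0$ on $J^-(\Sigma_a)$ is best justified by the time-reversed energy estimate (\cref{cor:sliceest}) rather than by \cref{cor:CauchyUnique}.
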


\begin{proof}
a)
The past $I^-(\Sigma)$ of $\Sigma$ is globally hyperbolic and hence has a smooth spacelike Cauchy hypersurface $\Sigma_0$.
One checks that $\Sigma_0$ is also a Cauchy hypersurface for $M$.
\begin{center}
\begin{pspicture}(-4,-2)(4,2)
\psline[linecolor=lightgray,fillcolor=lightgray,fillstyle=solid](-2,-2)(2,-2)(2,2)(-2,2)
\psline[linecolor=gray,fillcolor=gray,fillstyle=solid](-2,2)(0,0)(2,2)
\psline[linewidth=.5mm](-2,2)(0,0)(2,2)
\psecurve[linewidth=.5mm](-3,-1.5)(-2,-1.1)(0,-1)(2,-.6)(3,-1)

\uput{0}[0](1,1){\psframebox*[framearc=.3]{$\Sigma$}}
\uput{0}[0](-1.7,-0.3){\psframebox*[framearc=.3]{$I^-(\Sigma)$}}
\uput{0}[0](1,-.8){\psframebox*[framearc=.3]{$\Sigma_0$}}
\end{pspicture}

\emph{Fig.~3}
\end{center}
Let $\chi_+\in L^\infty(M,\R)$ be the characteristic function of $J^+(\Sigma)$.
Then $\chi_+\cdot f\in L^2\locsc(M)=L^2\locsc(t(M),H^0(\Sigma_\bullet))$.
By \tref{thm:Cauchy1}, there is a unique $u_+\in\FE^1(M,P)$ such that $Pu_+=\chi_+\cdot f$ and $u_+|_{\Sigma_0} = \na_\nu u_+|_{\Sigma_0} = 0$.
Moreover, $\supp(u_+) \subset J^+(\supp(\chi_+f)) \subset J^+(J^+(\Sigma)) = J^+(\Sigma)$.
Since $u\equiv 0$ on $I^-(\Sigma)$, we have $u_+|_\Sigma=0$.

Now let $\chi_-\in L^\infty(M,\R)$ be the characteristic function of $J^-(\Sigma)$.
Replacing $\Sigma_0$ by a smooth spacelike Cauchy hypersurface in the future of $\Sigma$, the same arguments yield a section $u_-\in\FE^1(M,P)$ with $Pu_-=\chi_-\cdot f$ and  $u_-|_\Sigma=0$.
For $\tilde u:=u_++u_-\in\FE^1(M,P)$ we have $P\tilde u=f$ and $\tilde u|_\Sigma=0$.

b)
Let $w\in\FE^1(M,P)$ be such that $w|_\Sigma=u_0$.
We apply part a) of the proof with $f$ replaced by $-Pw$.
This yields $v\in\FE^1(M,P)$ with $Pv=-Pw$ and $v|_\Sigma=0$.
Then $u:=\tilde u+w+v\in\FE^1(M,P)$ satisfies $Pu=f+Pw-Pw=f$ and $u|_\Sigma=w|_\Sigma=u_0$.
\end{proof}

\subsection{The characteristic initial value problem}
The previous existence statement does not require any assumption on the Cauchy hypersurface, neither on its regularity nor on its causal type.
Uniqueness cannot be expected in this generality because we know from the discussion of the Cauchy problem that, in the spacelike case, we also need to prescribe the normal derivative along the Cauchy hypersurface in order to uniquely determine the solution.

In the characteristic case, the situation is different.
Let us first make this more precise.
Any partial Cauchy hypersurface $\Sigma$ is Lipschitz and hence has a tangent space at almost all points due to Rademacher's theorem.
We call $\Sigma$ \emph{characteristic} if the induced metric degenerates on these tangent spaces.
Now we have:

\begin{thm}\label{thm:CharacteristicUniqueness}
Let $\Sigma\subset M$ be a characteristic partial Cauchy hypersurface.
Assume that $J^+(\Sigma)$ is past compact.

Then for any $f\in L^2\locsc(M)$ and any $u_0\in H^1_c(\Sigma)$ there exists $u\in\FE^1(M,P)$ such that $Pu=f$ on $J^+(\Sigma)$ and $u|_\Sigma=u_0$.
On $J^+(\Sigma)$, $u$ is unique.
%
\end{thm}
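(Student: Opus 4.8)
The plan is to prove existence and uniqueness separately. Existence I would reduce to the inhomogeneous Cauchy problem of \tref{thm:Cauchy1}, while the new content is the uniqueness, which rests on the Green's formula of the appendix; the decisive point there is that the boundary term along a \emph{characteristic} hypersurface involves only the traces of the sections, not their transversal derivatives.

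\emph{Existence.} Since $u_0\in H^1_c(\Sigma)$, by definition there is $w\in\FE^1(M,P)$ with $w|_\Sigma=u_0$. Replacing $f$ by $f-Pw$, it suffices to find $v\in\FE^1(M,P)$ with $v|_\Sigma=0$ and $Pv=f$ on $J^+(\Sigma)$, and then set $u=w+v$. Put $\tilde f:=\chi_{J^+(\Sigma)}\cdot f\in L^2\locsc(M)$; its support is a closed subset of $J^+(\Sigma)$, hence both spatially compact (as $\supp f$ is) and past compact (as $J^+(\Sigma)$ is). I would construct the retarded solution of $Pv=\tilde f$ as a limit of solutions given by \tref{thm:Cauchy1}: choose level sets $\Sigma_{\tau_n}$ of a Cauchy temporal function with $\tau_n$ decreasing and solve $Pv_n=\tilde f$ with vanishing Cauchy data on $\Sigma_{\tau_n}$. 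By finiteness of the speed of propagation $\supp(v_n)\subset J^+(\supp\tilde f)$, and past compactness guarantees that $\supp\tilde f\cap J^-(K_1)$ is compact for the compact $K_1$ with $\supp\tilde f\subset J(K_1)$, whence $J^+(\supp\tilde f)\subset J(K_1\cup(\supp\tilde f\cap J^-(K_1)))$ is spatially compact. On any fixed compact set only finitely much of $\tilde f$ matters, so by \cref{cor:CauchyUnique} the $v_n$ stabilise there, and the energy estimate of \cref{cor:EnEstOptimal} produces a limit $v\in\FE^1(M,P)$ with $Pv=\tilde f$ and $\supp(v)\subset J^+(\Sigma)$. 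As $v$ vanishes on $I^-(\Sigma)$ and lies in $H^1\loc(M)$ by \cref{cor:FEHk}, its trace on $\Sigma$ is $0$.

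\emph{Uniqueness.} Let $u_1,u_2\in\FE^1(M,P)$ both solve the problem and set $w:=u_1-u_2$, so $Pw=0$ on $J^+(\Sigma)$ and $w|_\Sigma=0$; I must show $w=0$ on $J^+(\Sigma)$. Since $w$ lies in $H^1\loc(M)$ and $\Sigma$ has measure zero, it suffices to show $\int_M\langle w,\phi\rangle\,dV=0$ for every test section $\phi$ with $\supp(\phi)\subset I^+(\Sigma)$ compact. Given such $\phi$, fix a smooth spacelike Cauchy hypersurface $\Sigma'$ in the future of $\supp(\phi)$ and solve the adjoint problem $P^*\psi=\phi$ with vanishing Cauchy data on $\Sigma'$; note that $P^*$ is again a wave operator and $\supp(\psi)\subset J^-(\supp\phi)$. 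Past compactness makes the overlap $J^+(\Sigma)\cap J^-(\supp\phi)$ compact, so on $\Omega:=J^+(\Sigma)\cap J^-(\Sigma')$ the Green's formula of the appendix applies and gives
\[
\int_\Omega\langle w,\phi\rangle\,dV
=\int_\Omega\langle w,P^*\psi\rangle\,dV
=\int_\Omega\langle Pw,\psi\rangle\,dV+\int_{\partial\Omega}B(w,\psi),
\]
where $B$ is the first-order boundary expression. On the spacelike part $\Sigma'$ the contribution vanishes because $\psi=0$ there, and $\int_\Omega\langle Pw,\psi\rangle\,dV=0$ because $Pw=0$ on $\Omega\subset J^+(\Sigma)$. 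For the characteristic piece $\Sigma$ the conormal is lightlike and tangent to $\Sigma$, so the transversal derivatives in $B$ are actually derivatives along $\Sigma$; hence $B(w,\psi)|_\Sigma$ depends only on the traces $w|_\Sigma,\psi|_\Sigma$ and their tangential derivatives, and $w|_\Sigma=0$ forces $B(w,\psi)|_\Sigma=0$. Since $\supp(\phi)\subset\Omega$, this yields $\int_M\langle w,\phi\rangle\,dV=0$, hence $w=0$ on $I^+(\Sigma)$ and, by continuity together with $w|_\Sigma=0$, on all of $J^+(\Sigma)$.

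The main obstacle is the rigorous justification of this Green's formula and of the vanishing of its boundary term on $\Sigma$: here $\Sigma$ is only Lipschitz, the induced volume density degenerates, and $w,\psi$ have merely finite energy, so the boundary integral must be interpreted with care — which is precisely why the identity is isolated in the appendix. I would first establish it for smooth $w,\psi$, invoke \cref{cor:dense} to approximate finite energy solutions by smooth ones, and then pass to the limit using the local energy estimates and the compactness of the overlap region. The structural fact that $B|_\Sigma$ detects only the trace of $w$ is exactly what permits the single condition $w|_\Sigma=0$ to replace the pair of Cauchy data required in the spacelike case.
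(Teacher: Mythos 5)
Your uniqueness argument is essentially the paper's. Solving the adjoint problem backwards from a Cauchy hypersurface $\Sigma'$ in the future of $\supp\phi$ produces exactly the section $\psi$ with $\supp\psi\subset J^-(\supp\phi)$ that the paper obtains from the retarded Green's operator for $P^\dagger$; past compactness of $J^+(\Sigma)$ makes the overlap compact; and the characteristic boundary term dies because $L$ is tangent to $\Sigma$, so only the trace $w|_\Sigma$ (and its tangential derivative) enters. The justification you defer -- smooth case via Stokes, then density via \cref{cor:dense} and continuity of both sides on $\FE^1(M,P)$ -- is precisely the content of the appendix (\lref{lem:Green}). One simplification: you do not need the mixed-boundary region $\Omega=J^+(\Sigma)\cap J^-(\Sigma')$ at all; since $\supp w\cap\supp\psi\cap J^+(\Sigma)$ is compact by past compactness, \lref{lem:Green} applies directly with $\Omega=J^+(\Sigma)$, and the $\Sigma'$-term never arises.

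The existence part is where you genuinely deviate, and where there is a gap. The paper proves that a smooth spacelike Cauchy hypersurface $\Sigma_0\subset I^-(\Sigma)$ is a Cauchy hypersurface for $I^+(\Sigma)\,\dot\cup\,\Sigma\,\dot\cup\,I^-(\Sigma)$ -- past compactness enters exactly there, to rule out inextensible timelike curves trapped in $J^+(\Sigma)$ -- and then runs the argument of \tref{thm:GeneralExistence} on that globally hyperbolic open subset. You instead stay on $M$ and build a retarded solution of $Pv=\tilde f$ by solving with vanishing data on hypersurfaces $\Sigma_{\tau_n}$ receding to the past. Two of your assertions are not justified as written: first, $\supp(v_n)\subset J^+(\supp\tilde f)$ is false for a general $\tau_n$, since $v_n$ also has an advanced part generated by the portion of $\tilde f$ lying below $\Sigma_{\tau_n}$ (one only gets $\supp(v_n)\subset J(\supp\tilde f)$); second, stabilisation of the $v_n$ on compact sets does not follow from \cref{cor:CauchyUnique} alone -- uniqueness says nothing until you control the Cauchy data of the differences $v_n-v_m$ on $\Sigma_{\tau_n}$, which requires a finite-propagation-speed argument. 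Both gaps are closed by one observation you never make explicit: since $\supp\tilde f$ is spatially compact \emph{and} past compact, it is contained in $J^+(\Sigma_\tau)$ for all sufficiently negative $\tau$ (write $\supp\tilde f\subset J^+(K_1)\cup\bigl(\supp\tilde f\cap J^-(K_1)\bigr)$ and note the second set is compact, hence lies above some level of $t$, as does $K_1$). With this, $v_n$ vanishes identically below $\Sigma_{\tau_n}$ for large $n$, so $\supp(v_n)\subset J^+(\supp\tilde f)$, and $v_n=v_m$ everywhere by \cref{cor:CauchyUnique}; in fact the whole limiting procedure and the appeal to \cref{cor:EnEstOptimal} then become superfluous, since a single Cauchy hypersurface sufficiently far in the past already yields the desired $v\in\FE^1(M,P)$ with $\supp(v)\subset J^+(\Sigma)$ and hence $v|_\Sigma=0$.
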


\begin{proof}
a)
Since the past $I^-(\Sigma)$ is globally hyperbolic it contains a smooth spacelike Cauchy hypersurface $\Sigma_0$.
We need to check that $\Sigma_0$ is also a Cauchy hypersurface for $\tilde M:=I^+(\Sigma)\dot\cup\Sigma\dot\cup I^-(\Sigma)\subset M$.
This is not obvious anymore because $\Sigma$ is only a partial Cauchy hypersurface.

Let $c:I\to \tilde M$ be an inextensible timelike curve.
W.l.o.g.\ we assume that $I=(-1,1)$ and that $c$ is future-directed.
We put $t_0:=\sup\{t\in(-1,1)\mid c((-1,t))\subset I^-(\Sigma)\}$.

\emph{Case 1:} $t_0=1$.

Then $c$ is an inextensible timelike curve in $I^-(\Sigma)$ and hence intersects $\Sigma_0$ exactly once.

\emph{Case 2:} $-1<t_0<1$.

Now $c|_{(-1,t_0)}$ is an inextensible timelike curve in $I^-(\Sigma)$ and hence intersects $\Sigma_0$ exactly once.
Moreover, $c|_{[t_0,1)}$ is contained in $J^+(\Sigma)$ and therefore does not intersect $\Sigma_0$.
Altogether, $c$ intersects $\Sigma_0$ exactly once.

\emph{Case 3:} $t_0=-1$.

We show that this case cannot occur.
It would mean that $c$ is entirely contained in $J^+(\Sigma)$.
Thus $c((-1,0])$ would be contained in $J^+(\Sigma)\cap J^-(c(0))$ which is a compact set by past-compactness of $J^+(\Sigma)$.
Hence $c(s)$ has an accumulation point as $s\searrow-1$.
Since $c$ is timelike this accumulation point is unique.
Thus one can extend $c$ continuously to the past which contradicts inextensibility of $c$.

This proves that $\Sigma_0$ is indeed a Cauchy hypersurface for $\tilde M$.
Now proceeding as in the proof of \tref{thm:GeneralExistence} one shows existence of $u$.

b)
As to uniqueness, suppose $u|_\Sigma=0$ and $Pu=0$ holds on $J^+(\Sigma)$.
Let $\phi\in C^\infty_c(M;S^*)$ be a test section with $\supp(\phi)\subset J^+(\Sigma)$.
We need to show $u[\phi]=0$.

Let $G^\dagger_-:C^\infty_c(M;S^*)\to C^\infty(M;S^*)$ be the retarded Green's operator for the formally dual operator $P^\dagger$, see \cite[Sec.~3.4]{BGP07}.
Now $\supp(G^\dagger_-(\phi)) \subset J^-(\supp\phi)$.
Since $J^+(\Sigma)$ is past compact, the set $J^+(\Sigma) \cap J^-(\supp\phi)$ is compact.
The Green's formula~\eqref{eq:Green} with $\psi = G^\dagger_-(\phi)$ yields
\begin{align*}
u[\phi]
&=
\int_{J^+(\Sigma)} \phi(u)\, dV
=
\int_{J^+(\Sigma)} (P^\dagger G^\dagger_-(\phi))(u)\, dV \\
& =
\int_{J^+(\Sigma)} (G^\dagger_-(\phi))(Pu)\, dV 
+ \int_{\partial J^+(\Sigma)}  (G^\dagger_-(\phi)(\na_Lu) - \hat\na_L G^\dagger_-(\phi)(u))\cdot \AL \, .
\end{align*}
The first integral vanishes because $Pu=0$ on $J^+(\Sigma)$.
The boundary term vanishes because $u=0$ on $\partial J^+(\Sigma)$.
Note that $L$ is tangential to the boundary so that $\na_Lu=0$ as well.
Thus $u[\phi]=0$ for every test section with support in $J^+(\Sigma)$.
Hence $u=0$ on $J^+(\Sigma)$.
\end{proof}

The assumption that $J^+(\Sigma)$ be past compact is crucial.
If we drop it, \tref{thm:CharacteristicUniqueness} fails:

\begin{ex}
Let $M$ be the $(1+1)$-dimensional Minkowski space with standard coordinates $x_0,x_1$.
The boundaries of the future and of the past lightcone, $\Sigma'=\partial J^+(0,0)$ and $\Sigma=\partial J^-(0,0)$, are both characteristic partial Cauchy hypersurfaces.
Now $J^+(\Sigma')$ is past compact while $J^+(\Sigma)$ is not.
For instance, $J^-(1,0) \cap J^+(\Sigma)$ is not compact.
\begin{center}
\begin{pspicture}(-6.4,-2.8)(8,2.2)
\psline[linecolor=lightgray,fillcolor=lightgray,fillstyle=solid](-3.25,-2)(-6,0.75)(-6,2)(-0.5,2)(-0.5,0.75)
\psline[linecolor=gray,fillcolor=gray,fillstyle=solid](-3.25,1)(-1.75,-0.5)(-3.25,-2)(-4.75,-0.5)
\psline[linewidth=0.2mm](-4.75,-0.5)(-3.25,1)(-1.75,-0.5)
\psline[linewidth=.5mm](-6,0.75)(-3.25,-2)(-0.5,0.75)

\uput{0}[0](-1.9,1.5){\psframebox*[framearc=.3]{$J^+(\Sigma')$}}
\uput{0}[0](-4.7,0){\psframebox*[framearc=.3]{$J^-(1,0)\cap J^+(\Sigma')$}}
\uput*[315](-1.75,-0.5){$\Sigma'$}
\uput{0}[0](-5,-2.5){$J^+(\Sigma')$ is past compact}

\psline[linecolor=lightgray,fillcolor=lightgray,fillstyle=solid](6,-2)(6,2)(0.5,2)(0.5,-2)
\psline[linecolor=gray,fillcolor=gray,fillstyle=solid](3.25,1)(6,-1.75)(6,-2)(0.5,-2)(0.5,-1.75)
\psline[linewidth=0.2mm](0.5,-1.75)(3.25,1)(6,-1.75)
\psline[linecolor=white,fillcolor=white,fillstyle=solid](5.35,-2.1)(3.25,0)(1.15,-2.1)
\psline[linewidth=.5mm](5.25,-2)(3.25,0)(1.25,-2)

\uput{0}[0](4.5,1.5){\psframebox*[framearc=.3]{$J^+(\Sigma)$}}
\uput{0}[0](1.8,0.5){\psframebox*[framearc=.3]{$J^-(1,0)\cap J^+(\Sigma)$}}
\uput*[225](4.25,-1){$\Sigma$}
\uput{0}[0](1.3,-2.5){$J^+(\Sigma)$ is not past compact}
\end{pspicture}

\emph{Fig.~4}
\end{center}
Indeed, \tref{thm:CharacteristicUniqueness} holds for $\Sigma'$ but not for $\Sigma$.
Let $v\in C^\infty_c(\R,\R)$ be such that $\supp(v)=[1,2]$.
We put $u(x_0,x_1) := v(x_0-x_1)$.
Then $u\in C^\infty_{sc}(M,\R)$ solves the wave equation $\Box u=0$.
It is a ``right traveling wave''.
The support $\supp(u) = \{(x_0,x_1)\in M \mid x_0-2 \leq x_1 \leq x_0-1\}$ is spatially compact and does not meet $\Sigma$.
Hence $u|_\Sigma=0$ but $u\not\equiv 0$ on $J^+(\Sigma)$.
\begin{center}
\begin{pspicture}(-4,-2)(4,2.2)
\psline[linecolor=lightgray,fillcolor=lightgray,fillstyle=solid](-4,-2)(-2,-2)(0,0)(2,-2)(3,-2)(3,1)(-4,1)(-4,-2)
\psline[linewidth=.5mm](-2,-2)(0,0)(2,-2)
\psline[linecolor=gray,fillcolor=gray,fillstyle=solid](-3,-2)(0,1)(-1,1)(-4,-2)(-1,1)(-4,-2)
\psline[linewidth=0.2mm](-3,-2)(0,1)
\psline[linewidth=0.2mm](-4,-2)(-1,1)

\uput*[225](1,-1){$\Sigma$}
\uput{0}[0](1.2,0){\psframebox*[framearc=.3]{$J^+(\Sigma)$}}
\uput{0}[0](-2.2,0){\psframebox*[framearc=.3]{$\supp(u)$}}
\end{pspicture}

\emph{Fig.~5}
\end{center}
The same discussion applies if one replaces $\Sigma=\partial J^-(0,0)$ by the characteristic hyperplane $\Sigma=\{(s,s)\in M \mid s\in\R\}$.
\end{ex}

\section*{Appendix. Green's formula for lightlike boundary}

Let $P$ be a wave operator acting on sections of a vector bundle $S$ over $(\Omega,g)$, a Lorentzian manifold with lightlike Lipschitz boundary $\partial \Omega$.
There exists a connection $\na$ on $S$ and an endomorphism field $B$ on $S$ such that 
\[
P = \Box^\na + B
\]
where $\Box^\na$ is the connection-d'Alembert operator explained in Example~\ref{ex:connectiondAlembert}.
See e.g.\ \cite[Lem.~1.5.5]{BGP07} or \cite[Prop.~3.1]{BK96} for a proof.
Let $\hat\na$ be the induced connection on the dual bundle $S^*$.
It is characterized by $\partial_X(\psi(u))=(\hat\na_X\psi)(u) + \psi(\na_Xu)$ for all differentiable sections $u$ of $S$ and $\psi$ of $S^*$ and all tangent vectors $X$.

We now assume that $\Omega$ is oriented and let $\vol$ be the volume form.
This is no serious restriction because we can always pass to the orientation covering.
For any base $(b_0,b_1,\ldots,b_n)$ of $T_x\Omega$ we put $g_{ij}:=g(b_i,b_j)$ and let $(g^{ij})$ be the matrix inverse to $(g_{ij})$.
We fix $C^2$-sections $u$ of $S$ and $\psi$ of $S^*$.
Now the $n$-covector
\[
\eta|_x := \sum_{ij}g^{ij}\big(\hat\na_i\psi(u) - \psi(\na_iu)\big)\cdot b_j \lrcorner \vol \in \Lambda^nT^*_x\Omega
\]
is defined independently of the choice of base. 
Here $b_j \lrcorner \vol$ denotes the insertion of $b_j$ into the first slot of $\vol$, i.e.\ $b_j \lrcorner \vol = \vol(b_j,\cdot,\ldots,\cdot)$.
Now $\eta$ is a globally defined $n$-form on $\Omega$ of $C^1$-regularity.

To compute the exterior derivative of $\eta$ we may assume that the tangent frame $(b_0,b_1,\ldots,b_n)$ is chosen synchronous at the point $x$ under consideration, i.e.\ $\na b_j=0$ at $x$.
Then the derivatives of $g^{ij}$ also vanish at $x$ and we get at $x$
\begin{align*}
d\eta
&=
\sum_{ijk}g^{ij}\partial_k\big(\hat\na_i\psi(u) - \psi(\na_iu)\big)\cdot b_k^*\wedge (b_j \lrcorner \vol) \\
&=
\sum_{ij}g^{ij}\partial_j\big(\hat\na_i\psi(u) - \psi(\na_iu)\big)\cdot \vol \\
&=
\sum_{ij}g^{ij}\big(\hat\na_j\hat\na_i\psi(u) + \hat\na_i\psi(\na_ju)
- \hat\na_j\psi(\na_iu) - \psi(\na_j\na_iu)\big)\cdot \vol \\
&=
\sum_{ij}g^{ij}\big(\hat\na_j\hat\na_i\psi(u) - \psi(\na_j\na_iu)\big)\cdot \vol \\
&=
\big((\Box^{\hat\na}\psi)(u) - \psi(\Box^\na u)\big)\cdot \vol \, .
\end{align*}
If $\supp(\psi) \cap \supp(u)$ is compact and contained in the interior of $\Omega$, then 
\[
0 = \int_\Omega d\eta = \int_\Omega \big((\Box^{\hat\na}\psi)(u) - \psi(\Box^\na u)\big)\, dV \, .
\]
Thus $\Box^{\hat\na}$ is the formal dual of $\Box^{\na}$.
We conclude
\[
P^\dagger = \Box^{\hat\na} + B^\dagger
\]
where $B^\dagger$ is the pointwise adjoint endomorphism field of $S^*$.
We are interested in the boundary term which occurs if $\supp(\psi) \cap \supp(u)$ is no longer contained in the interior of $\Omega$.

Let $x\in\partial \Omega$ be a point at which the boundary is differentiable.
Then there is a lightlike vector $L\in T_x\partial \Omega$, unique up to multiples.
We choose a lightlike vector $\check L\in T_x\Omega$ such 
\begin{equation}
g(L,\check L)=-1.
\label{eq:LLcheck}
\end{equation}
Although $\check L$ is not uniquely determined by $L$, the restriction of the $n$-covector $\check L \lrcorner \vol$ to $T_x\partial \Omega$ is determined by $L$.
We denote it by $\AL := \check L \lrcorner \vol \in \Lambda^nT_x^*\partial \Omega$.
Since $\check L$ is not tangent to $\partial \Omega$, the $n$-covector $\AL$ is nonzero.
If we replace $L$ by a multiple $\alpha L$, $\alpha\in\R\setminus\{0\}$, then we may simply replace $\check L$ by $\tfrac{1}{\alpha}\check L$ in order to keep \eqref{eq:LLcheck} valid.
Hence $\mathrm{A}_{\alpha L} = \tfrac{1}{\alpha}\AL$.

To identify the boundary term we express $\eta$ at a regular point of $\partial \Omega$ using a base of the form $\check L,L,b_2,\ldots,b_n$ where $L$ is lightlike and tangential to the boundary and $b_2,\ldots,b_n$ are spacelike and tangential to the boundary.
\begin{center}
\psset{unit=0.25Em}
\begin{pspicture}(60,50)
\pscustom[linewidth=0.35]{\newpath \psline(0.1,0)(25.1,40)}
\pscustom[linewidth=0.35]{\newpath \psbezier(25.1,40)(52.4,40.6)(48.1,48.5)(48.5,50.1)}
\pscustom[linewidth=0.35]{\newpath \psline(48.1,46.3)(23.4,6.9)}
\pscustom[linewidth=0.35,strokeopacity=0.7]{\newpath \psline(48.5,50)(43.6,42.7)} \uput{1}[0](48.5,50){$\partial\Omega$}

\qdisk(23.5,20.4){0.5} \uput{1.2}[265](23.5,20.4){$x$}
\pscustom[linewidth=0.25]{\newpath \psline{->}(23.5,20.4)(30.7,31.9)} \uput{1}[0](30.7,31.9){$L$} 
\pscustom[linewidth=0.25]{\newpath \psline{->}(23.5,20.4)(36.8,22.8)} \uput{1}[0](36.8,22.8){$b_2,\ldots,b_n$}

\pscustom[linewidth=0.25,linestyle=dashed,dash=0.3 0.3]{\newpath \psline(23.5,20.4)(16.2,25.8)}
\pscustom[linewidth=0.25]{\newpath \psline{->}(16.2,25.8)(14.3,27.2)} \uput{1}[180](14.3,27.2){$\check{L}$}
\end{pspicture}

\emph{Fig.~6}
\end{center}

Then the pull-back of $\eta$ to $\partial \Omega$ takes the form
\[
\eta = -(\hat\na_L\psi(u) - \psi(\na_Lu))\cdot \AL \, .
\]
Due to the scaling property of $\AL$, this expression is independent of the choice of $L$.
The Stokes' theorem for manifolds with Lipschitz boundary (see e.g.\ \cite[p.~282]{Alt}) yields
\[
\int_{\partial \Omega} \eta
=
\int_\Omega d\eta 
= 
\int_\Omega \big((\Box^{\hat\na}\psi)(u) - \psi(\Box^\na u)\big)\, dV 
=
\int_\Omega \big((P^\dagger\psi)(u) - \psi(Pu)\big)\, dV \, .
\]

\begin{lem}[Green's formula]\label{lem:Green}
Let $\Sigma\subset M$ be a characteristic Lipschitz hypersurface.
Let $u\in\FE^1(M,P;S)$ and $\psi\in C^\infty(M;S^*)$ such that $\supp(u) \cap \supp\psi \cap J^+(\Sigma)$ is compact.
Then
\begin{equation}
\int_{J^+(\Sigma)} \big(\psi(Pu) - (P^\dagger\psi)(u)\big) dV
= 
\int_\Sigma (\hat\na_L\psi(u) - \psi(\na_Lu))\AL \, .
\label{eq:Green}
\end{equation}
\end{lem}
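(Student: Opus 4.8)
The plan is to read off the identity from the computation carried out just above the statement, and then to remove the smoothness hypothesis on $u$ by a density argument. Indeed, applying that computation to the region $\Omega=J^+(\Sigma)$, whose boundary is the characteristic Lipschitz hypersurface $\partial\Omega=\Sigma$ (for a partial Cauchy hypersurface $\partial J^+(\Sigma)=\Sigma$ since $I^+(\Sigma)$ is the interior and $I^-(\Sigma)$ lies on the other side), Stokes' theorem for Lipschitz boundaries gives, for \emph{smooth} $u\in C^\infty_{sc}(M)$ and any $\psi$ with $\supp(u)\cap\supp\psi\cap J^+(\Sigma)$ compact, the equality $\int_\Sigma\eta=\int_{J^+(\Sigma)}\big((P^\dagger\psi)(u)-\psi(Pu)\big)\,dV$ together with $\eta|_\Sigma=-(\hat\na_L\psi(u)-\psi(\na_Lu))\AL$. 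Rearranging signs this is exactly \eqref{eq:Green}. So the smooth case is already done, and the remaining work is the passage to $u\in\FE^1(M,P)$.

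\textbf{Reduction to compactly supported $\psi$.}
First I would reduce to the case that $\psi\in C^\infty_c(M;S^*)$. Set $K:=\supp(u)\cap\supp\psi\cap J^+(\Sigma)$, which is compact by assumption, and choose $\chi\in C^\infty_c(M,\R)$ with $\chi\equiv1$ on a neighborhood of $K$. Replacing $\psi$ by $\chi\psi$ changes neither side of \eqref{eq:Green}: every term produced by differentiating $\chi$ carries a factor $d\chi$, which is supported away from $K$ and hence vanishes on $\supp(u)\cap\supp\psi\cap J^+(\Sigma)$, precisely the set on which the integrands are supported; on the complementary part $\chi\equiv1$. Thus, without loss of generality, $\psi$ is compactly supported, which will keep all integrands uniformly supported in a fixed compact set under approximation.

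\textbf{Density and the limiting argument.}
By \cref{cor:dense} I would choose $u_j\in C^\infty_{sc}(M)$ with $u_j\to u$ in $\FE^1(M,P)$, keeping the supports inside a common spatially compact set $K_0$ (using the support control in \lref{lem:Dicht}). The smooth-case identity holds for each $u_j$, and I pass to the limit. For the left-hand (volume) integral this is routine: since $P:\FE^1(M,P)\to L^2\locsc(t(M),H^0(\Sigma_\bullet))=L^2\locsc(M)$ is continuous and $\FE^1(M,P)\hookrightarrow H^1\loc(M)\hookrightarrow L^2\loc(M)$ by \cref{cor:FEHk}, both $u_j\to u$ and $Pu_j\to Pu$ in $L^2\loc$ on the compact set $\supp\psi\cap K_0$, so $\int_{J^+(\Sigma)}\big(\psi(Pu_j)-(P^\dagger\psi)(u_j)\big)\,dV$ converges to the corresponding integral for $u$.

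\textbf{The main obstacle: the boundary term.}
The delicate step is convergence of the right-hand integral over the merely Lipschitz, characteristic surface $\Sigma$. Here \cref{cor:FEHk} gives $\FE^1(M,P)\hookrightarrow H^1\loc(M)$, and the trace theorem for Lipschitz hypersurfaces \cite{D96} makes the restriction $H^1\loc(M)\to H^{1/2}\loc(\Sigma)$ continuous, so $u_j|_\Sigma\to u|_\Sigma$ in $H^{1/2}\loc(\Sigma)$. The term $\int_\Sigma\hat\na_L\psi(u_j)\AL$ then converges immediately, since it depends on $u_j$ only through its trace, paired against the fixed compactly supported density $\hat\na_L\psi\cdot\AL$. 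The genuinely hard term is $\int_\Sigma\psi(\na_Lu_j)\AL$, which involves a derivative of $u$; the point is that $L$ is \emph{tangent} to $\Sigma$, so $\na_Lu|_\Sigma=\na_L(u|_\Sigma)$ is the tangential derivative of the $H^{1/2}$-trace, lying in $H^{-1/2}\loc(\Sigma)$. I would make this integral meaningful and continuous in $u|_\Sigma$ by pairing $\na_L(u|_\Sigma)$ against the smooth compactly supported $\psi\AL$ through $H^{-1/2}$--$H^{1/2}$ duality, equivalently by integrating by parts along the lightlike generators of $\Sigma$ in direction $L$. With this, $\int_\Sigma\psi(\na_Lu_j)\AL$ converges as well, the limiting identity is \eqref{eq:Green} for $\chi\psi$, and undoing the cutoff finishes the proof. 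The crux throughout is exactly that the tangency of $L$ lets the boundary integral be controlled by the trace $u|_\Sigma$ alone, which is all the regularity a finite-energy section provides on a characteristic Lipschitz hypersurface.
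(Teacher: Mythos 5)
Your proposal is correct and takes essentially the same route as the paper's proof: the identity for smooth $u$ comes from the Stokes computation preceding the lemma with $\Omega=J^+(\Sigma)$, and the extension to $u\in\FE^1(M,P)$ is by density (\cref{cor:dense}), using continuity of $P:\FE^1(M,P)\to L^2\locsc(M)$ for the volume term and the Lipschitz trace theorem plus the tangency of $L$ (so that $\na_L$ maps $H^{1/2}\loc(\Sigma)$ continuously to $H^{-1/2}\loc(\Sigma)$) for the boundary term. Your preliminary reduction to compactly supported $\psi$ is a small piece of extra bookkeeping that the paper leaves implicit, but it does not change the substance of the argument.
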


\begin{proof}
The previous considerations with $\Omega=J^+(\Sigma)$ prove the formula if $u\in C^2(M)$.
We fix $\psi$ and regard both the left hand side and the right hand side of \eqref{eq:Green} as linear functionals of $u$.
The left hand side is clearly continuous in $u$ with respect to the topology of $\FE^1(M,P)$.
As to the right hand side, the map $\FE^1(M,P) \hookrightarrow H^1\loc(M) \to H^{1/2}\loc(\Sigma)$, $u\mapsto u|_\Sigma$, is continuous.
This uses the trace theorem for Lipschitz boundaries, see \cite[Thm.~1]{D96}.
Therefore $\int_\Sigma \hat\na_L\psi(u)\AL$ is continuous in $u$ with respect to the topology of $\FE^1(M,P)$.
Moreover, $\na_L$ yields a continuous linear map $H^{1/2}\loc(\Sigma) \to H^{-1/2}\loc(\Sigma)$.
Pairing against a smooth compactly supported section $\psi$ is continuous on $H^{-1/2}\loc(\Sigma)$.
Therefore $\int_\Sigma \psi(\na_Lu)\AL$ is also continuous in $u$ with respect to the topology of $\FE^1(M,P)$.

Thus the right hand side is continuous in $u$.
Since smooth sections are dense in $\FE^1(M,P)$ by \cref{cor:dense}, both sides have to agree for $u\in\FE^1(M,P)$.
\end{proof}


\end{document}